\newenvironment{enumeratei}{\begin{enumerate}[\upshape (i)]}%
{\end{enumerate}}
\newenvironment{enumerater}{\begin{enumerate}[\upshape (1)]}%
{\end{enumerate}}
\newcommand{\oo}[1]{\left]{#1}\right[}
\newcommand{\sdf}{\mathbin{\text{\rotatebox[origin=c]{90}{$\oslash$}}}}
\newcommand{\todot}{\overset{\boldsymbol{.}}{\rightarrow}}
\newcommand{\pup}[1]{\textup{(}{#1}\textup{)}}
\newcommand{\lgrp}{$\ell$-group}
\newcommand{\lidl}{$\ell$-i\-de\-al}
\newcommand{\jirr}{join-ir\-re\-duc\-i\-ble}
\newcommand{\mirr}{meet-ir\-re\-duc\-i\-ble}
\newcommand{\jh}{join-ho\-mo\-mor\-phism}
\newcommand{\mh}{meet-ho\-mo\-mor\-phism}
\newcommand{\eqdef}{\overset{\mathrm{def}}{=}}
\DeclareMathOperator{\Spec}{Spec}
\newcommand{\FL}{\operatorname{F}_{\ell}}
\DeclareMathOperator{\Down}{Down}
\DeclareMathOperator{\BR}{BR}
\DeclareMathOperator{\Ji}{Ji}
\DeclareMathOperator{\Mi}{Mi}
\DeclareMathOperator{\card}{card}
\newcommand{\ga}{\alpha}
\newcommand{\gb}{\beta}
\newcommand{\gd}{\delta}
\newcommand{\gf}{\varphi}
\newcommand{\gl}{\lambda}
\newcommand{\go}{\omega}
\newcommand{\eps}{\varepsilon}
\newcommand{\bck}[1]{[\![{#1}]\!]}
\newcommand{\gO}{\Omega}
\newcommand{\sd}{\mathbin{\smallsetminus}}
\newcommand{\jz}{$(\vee,0)$}
\newcommand{\jzh}{\jz-ho\-mo\-mor\-phism}
\newcommand{\pI}[1]{\bigl({#1}\bigr)}
\newcommand{\pII}[1]{\Bigl({#1}\Bigr)}
\newcommand{\set}[1]{\left\{#1\right\}}
\newcommand{\setm}[2]{\set{{#1}\mid{#2}}}
\newcommand{\vecm}[2]{\left({#1}\mid{#2}\right)}
\newcommand{\seq}[1]{\langle{#1}\rangle}
\newcommand{\dnw}{\mathbin{\downarrow}}
\newcommand{\es}{\varnothing}
\newcommand{\res}{\mathbin{\restriction}}
\newcommand{\EE}{\mathbb{E}}
\newcommand{\NN}{\mathbb{N}}
\newcommand{\ip}[1]{{#1}^{\sqcup\infty}}
\DeclareMathOperator{\Bool}{Bool}
\DeclareMathOperator{\Op}{Op}
\newcommand{\Ops}{\Op^-}
\DeclareMathOperator{\Idc}{Id_c}
\DeclareMathOperator{\rng}{rng}
\DeclareMathOperator{\Pow}{Pow}
\newcommand{\cD}{{\mathcal{D}}}
\newcommand{\cE}{{\mathcal{E}}}
\newcommand{\cF}{{\mathcal{F}}}
\newcommand{\cG}{{\mathcal{G}}}
\newcommand{\cL}{{\mathcal{L}}}
\numberwithin{equation}{section}
\newtheorem*{stat}{\name}
\newcommand{\name}{testing}
\theoremstyle{plain}
\newtheorem{theorem}{Theorem}[section]
\newtheorem{proposition}[theorem]{Proposition}
\newtheorem{corollary}[theorem]{Corollary}
\newtheorem{lemma}[theorem]{Lemma}
\theoremstyle{definition}
\newtheorem{definition}[theorem]{Definition}
\newtheorem{notation}[theorem]{Notation}
\newtheorem*{problem}{Problem}
\theoremstyle{remark}
\newtheorem*{note}{Note}
\newcommand{\qedc}{{\qed}~{\rm Claim~{\theclaim}.}}
\newcommand{\qedsc}{{\qed}~{\rm Claim.}}
\numberwithin{figure}{section}
\numberwithin{table}{section}
\newcommand{\kk}{\Bbbk}
\newcommand{\kkp}[1]{\kk^{(#1)}}
\newcommand{\scL}{\mathbin{\mathscr{L}}}
\title[Spectral subspaces]{Spectral subspaces of spectra of Abelian lattice-ordered groups in size aleph one}
\author[M. Plo\v{s}\v{c}ica]{Miroslav Plo\v{s}\v{c}ica}
\address{Faculty of Natural Sciences\\
\v{S}af\'arik's University\\
Jesenn\'a 5\\
04154 Ko\v{s}ice\\
Slovakia}
\email{miroslav.ploscica@upjs.sk}
\urladdr{https://ploscica.science.upjs.sk}
\author[F. Wehrung]{Friedrich Wehrung}
\address{Normandie Universit\'e, UNICAEN\\
CNRS UMR 6139, LMNO\\
14000 Caen\\
France}
\email{friedrich.wehrung01@unicaen.fr}
\urladdr{https://wehrungf.users.lmno.cnrs.fr}
\date{\today}
\subjclass[2010]{06D05; 06D20; 06D35; 06D50; 06F20; 46A55; 52A05; 52C35}
\keywords{Lattice-ordered; Abelian; group; vector lattice; ideal; homomorphic image; completely normal; distributive; lattice; countable; tree; relatively complete; join-irreducible; Heyting algebra; closed map; consonance; spectrum}
\begin{document}

\begin{abstract}
It is well known that the lattice~$\Idc{G}$ of all principal \lidl{s} of any Abelian \lgrp~$G$ is a completely normal distributive $0$-lattice, and that not every completely normal distributive $0$-lattice is a homomorphic image of some~$\Idc{G}$, \emph{via} a counterexample of cardinality~$\aleph_2$.
We prove that every completely normal distributive $0$-lattice with at most~$\aleph_1$ elements is a homomorphic image of some~$\Idc{G}$.
By Stone duality, this means that every completely normal generalized spectral space, with at most~$\aleph_1$ compact open sets, is homeomorphic to a spectral subspace of the $\ell$-spectrum of some Abelian \lgrp.
\end{abstract}

\maketitle


\section{Introduction}\label{S:Intro}

A subset~$I$, in a lattice-ordered group (in short \emph{\lgrp})~$G$, is an \emph{\lidl} if it is an order-convex normal subgroup closed under the lattice operations.
If~$I\neq G$, we say in addition that~$I$ is \emph{prime} if $x\wedge y\in I$ implies that $\set{x,y}\cap I\neq\es$, whenever $x,y\in G$.
In case~$G$ is \emph{Abelian}, the \emph{$\ell$-spectrum} of~$G$ is defined as the set~$\Spec{G}$ of all prime \lidl{s} of~$G$, endowed with the topology whose closed subsets are the $\setm{P\in\Spec{G}}{X\subseteq P}$ for $X\subseteq G$ (often called the \emph{hull-kernel topology}).
Denote by~$\cG$ the class of all Abelian \lgrp{s}.

The problem of the description of $\ell$-spectra of all Abelian \lgrp{s} (say the \emph{$\ell$-spectrum problem}) is stated, in the language of MV-algebras, in Mundici \cite[Problem~2]{Mund2011}.
Now under Stone duality (cf. Gr\"atzer~\cite[\S~II.5]{LTF}, Johnstone \cite[\S~II.3]{Johnst1982}, Rump and Yang~\cite{RumYan2009} for the case without top element, and Wehrung \cite[\S~2.2]{RAlg} for a summary), for any $G\in\cG$, $\Spec{G}$ corresponds to the lattice~$\Idc{G}$ of all principal \lidl{s} of~$G$; that is, $\Idc{G}=\setm{\seq{a}}{a\in G^+}$ where each $\seq{a}\eqdef\setm{x\in G}{(\exists n\in\NN)(|x|\leq na)}$.
This enables us to restate the $\ell$-spectrum problem as the description problem of the class $\Idc\cG\eqdef\setm{D}{(\exists G\in\cG)(D\cong\Idc{G})}$.
All such lattices are clearly distributive with smallest element (usually denoted by~$0$).
They are also \emph{completely normal} (cf. Bigard, Keimel, and Wolfenstein \cite[Ch.~10]{BKW}), that is, they satisfy the statement
 \[
 (\forall a,b)(\exists x,y)(a\vee b=a\vee y=x\vee b\text{ and }
 x\wedge y=0)\,.
 \]
Delzell and Madden observed in~\cite[Theorem~2]{DelMad1994}, \emph{via} a counterexample of cardinality~$\aleph_1$, that those properties are not sufficient to characterize~$\Idc\cG$.
On the other hand, the second author proved in~\cite{MV1} that every \emph{countable} completely normal distributive $0$-lattice belongs to~$\Idc\cG$.
The categorical concept of \emph{condensate}, initiated in the second author's work~\cite{Larder} with Pierre Gillibert, together with the main result of~\cite{Ceva}, enabled the second author to prove in~\cite{NonElt} that~$\Idc\cG$ is not the class of models of any class of~$\scL_{\infty\gl}$ sentences of lattice theory, for any infinite cardinal~$\gl$.
Using further tools from infinitary logic, the second author extended those results in~\cite{AccProj} by proving that~$\Idc\cG$ is not the \emph{complement of a projective class} over~$\scL_{\infty\infty}$, thus verifying in particular that the additional property of all lattices~$\Idc{G}$ coined by the first author in his proof of~\cite[Theorem~2.1]{Plo21} is still not sufficient to characterize~$\Idc\cG$.

As observed in the above-cited references, all those results extend to the class of all (lattice) \emph{homomorphic images} of lattices~$\Idc{G}$.
On the other hand, not every homomorphic image of a lattice of the form~$\Idc{G}$ belongs to~$\Idc\cG$ (cf. Wehrung \cite[Example~10.6]{MV1}).
Recast in terms of spectra, \emph{via} Stone duality, this means that \emph{not every spectral subspace of an $\ell$-spectrum is an $\ell$-spectrum}.

Moreover, not every completely normal bounded distributive lattice is a homomorphic image of some~$\Idc{G}$: a counterexample of cardinality~$\aleph_2$ is constructed in Wehrung~\cite{Ceva}.

In this paper we complete the picture above, by establishing that \emph{every completely normal distributive $0$-lattice~$D$, with at most~$\aleph_1$ elements, is a homomorphic image of~$\Idc{G}$ for some Abelian \lgrp~$G$}.
This also strengthens the first author's result, obtained in~\cite{Plo21}, that~$D$ is \emph{Cevian}.
In fact, we verify the slightly more general statement that~$G$ may be taken a vector lattice over any given countable totally ordered division ring~$\kk$ (cf. Theorem~\ref{T:Al1hom}), modulo the obvious change in the definition of an \lidl\ (i.e., \lidl{s} need to be closed under scalar multiplication by elements of~$\kk$; see Wehrung \cite[\S~2.3]{RAlg} for more detail).
Due to the results of \cite[\S~9]{RAlg}, the countability assumption on~$\kk$ cannot be dispensed with.

Our argument will roughly follow the one from Wehrung~\cite{MV1}, with the ``Main Extension Lemma'' \cite[Lemma~4.2]{MV1} strengthened from finite lattices to certain infinite lattices, and streamlined \emph{via} the introduction of \emph{consonance kernels} (cf. Definition~\ref{D:ConsKer}), as Lemma~\ref{L:MainExt}.
The proof of the ``closure step'' \cite[Lemma~7.2]{MV1} fails in that more general context, so we get only ``homomorphic image'' as opposed to ``isomorphic copy'', of~$\Idc{G}$.
This will also require a few known additional properties of finite distributive lattices and their homomorphisms, \emph{via} Birkhoff duality (see in particular Lemma~\ref{L:ClosedJL}).
Our final argument, given a completely normal distributive $0$-lattice~$L$, will start by expressing~$L$ as a directed union of an ascending $\go_1$-sequence $\vec{L}=\vecm{L_{\xi}}{\xi<\go_1}$ of countable completely normal distributive $0$-lattices, and then, with the help of Lemma~\ref{L:MainExt}, iteratively lift all subdiagrams $\vecm{L_{\xi}}{\xi<\ga}$, with $\ga<\go_1$, with respect to the functor~$\Idc$.
That part of our argument turns out to be valid not only for the chain~$\go_1$ but for any tree in which every element has countable height (cf. Theorem~\ref{T:Part3Ext}).

\section{Basic concepts}\label{S:Basic}

\subsection{Sets, posets, lattices}
For any set~$X$, $\Pow{X}$ denotes the powerset algebra of~$X$.
By ``countable'' we will mean ``at most countable''.
For an element~$a$ in a partially ordered set (from now on~\emph{poset})~$P$, we set $P\dnw a\eqdef\setm{p\in P}{p\leq a}$ (or $\dnw a$ if~$P$ is understood).
A subset~$A$ of~$P$ is a \emph{lower subset} of~$P$ if $P\dnw a\in A$ whenever $a\in A$.
A poset~$P$ with bottom element is a \emph{tree} if~$P\dnw a$ is well-ordered under the induced order whenever $a\in P$.

For a subset~$P$ in a poset~$Q$ and for $x\in Q$, $x^P$ (resp., $x_P$) denotes the least $y\in P$ such that $x\leq y$ (resp., the largest $y\in P$ such that $y\leq x$) if it exists.
We say that~$P$ is \emph{relatively complete in~$Q$} if~$x^P$ and~$x_P$ both exist for all~$x\in P$.
If~$P$ is a subalgebra of a Boolean algebra~$Q$, it suffices to verify that~$x^P$ exists whenever $x\in Q$ (resp., $x_P$ exists whenever $x\in Q$).

Relative completeness has been used in a description of projective Boolean algebras.  For the proof of the following
(easy) assertion see Heindorf and Shapiro~\cite[Lemma 1.2.7]{HeiSha1994}.

\begin{lemma}\label{L:RelCplBA}
Let~$A$, $A'$ be subalgebras of a Boolean algebra~$B$ with~$A'$ finitely generated over~$A$.
If~$A$ is relatively complete in~$B$, then so is~$A'$.
\end{lemma}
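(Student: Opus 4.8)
The plan is to reduce to a single extra generator and then exhibit the relative complement by an explicit formula. First, by the remark preceding the statement, it suffices to show that $x^{A'}$ exists for every $x\in B$; the existence of $x_{A'}$ then follows by applying this to the complement $\neg x$ and complementing back. Next, writing $A'$ as the subalgebra of~$B$ generated by $A\cup\set{b_1,\dots,b_n}$ and setting $A_0\eqdef A$ and $A_{k+1}\eqdef A_k(b_{k+1})$ (the subalgebra generated by $A_k\cup\set{b_{k+1}}$), it is enough to prove that each one-step extension preserves relative completeness in~$B$; an induction on~$k$, starting from the hypothesis on $A=A_0$, then yields the conclusion for $A'=A_n$. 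So everything comes down to the following: \emph{if $P$ is relatively complete in~$B$ and $b\in B$, then $P(b)$ is relatively complete in~$B$}.

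To prove this, I would first record the standard normal form: every element of $P(b)$ can be written as $(p\wedge b)\vee(q\wedge\neg b)$ with $p,q\in P$, because the set of all such elements contains~$P$ and~$b$ and is closed under the Boolean operations, hence equals $P(b)$. Now fix $x\in B$. Since $P$ is relatively complete in~$B$, the elements $(x\wedge b)^P$ and $(x\wedge\neg b)^P$ exist in~$P$, and I claim that
\[
c\eqdef\bigl((x\wedge b)^P\wedge b\bigr)\vee\bigl((x\wedge\neg b)^P\wedge\neg b\bigr)
\]
equals $x^{P(b)}$. It lies in $P(b)$ by construction. One checks $c\geq x$ directly, since $(x\wedge b)^P\wedge b\geq x\wedge b$ and $(x\wedge\neg b)^P\wedge\neg b\geq x\wedge\neg b$, whence $c\geq(x\wedge b)\vee(x\wedge\neg b)=x$. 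For minimality, let $d=(p\wedge b)\vee(q\wedge\neg b)\in P(b)$ with $d\geq x$. Meeting with~$b$ gives $p\wedge b=d\wedge b\geq x\wedge b$, hence $p\geq p\wedge b\geq x\wedge b$; as $p\in P$, minimality of $(x\wedge b)^P$ in~$P$ yields $p\geq(x\wedge b)^P$, so $p\wedge b\geq(x\wedge b)^P\wedge b$. Symmetrically $q\wedge\neg b\geq(x\wedge\neg b)^P\wedge\neg b$. Joining these, $d\geq c$. Therefore $c=x^{P(b)}$.

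I do not expect a genuine obstacle here; the argument is elementary. The only points calling for a little care are the normal-form description of $P(b)$, the verification that the displayed~$c$ is both an upper bound of~$x$ and the least one inside $P(b)$ (the crucial move being the transitivity step $p\wedge b\geq x\wedge b\Rightarrow p\geq x\wedge b$, which is exactly what makes the relative completeness of~$P$ applicable), and the routine bookkeeping of the induction over the finitely many generators together with the passage from~$x_{A'}$ to~$x^{A'}$ \emph{via} complementation.
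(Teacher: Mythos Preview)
Your proof is correct and is essentially the standard elementary argument. The paper itself does not give a proof of this lemma but simply refers to Heindorf and Shapiro \cite[Lemma~1.2.7]{HeiSha1994}; your reduction to a single generator and the explicit formula $x^{P(b)}=\bigl((x\wedge b)^P\wedge b\bigr)\vee\bigl((x\wedge\neg b)^P\wedge\neg b\bigr)$ is exactly the kind of routine verification that reference contains.
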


For posets~$P$ and~$Q$ with respective top elements~$\top_P$ and~$\top_Q$, a map $f\colon P\to Q$ is \emph{top-faithful} if $f^{-1}\set{\top_Q}=\set{\top_P}$.
For any poset~$P$, $\ip{P}$ denotes the poset obtained by adding an extra element, usually denoted by~$\infty$, atop of~$P$.
For any map $f\colon P\to Q$, we denote by $\ip{f}\colon\ip{P}\to\ip{Q}$ the unique extension of~$f$ sending~$\infty$ to~$\infty$.
Such maps are exactly the top-faithful maps from~$\ip{P}$ to~$\ip{Q}$.

We denote by~$\Ji{L}$ (resp., $\Mi{L}$) the set of all \jirr\ (resp., \mirr) elements in a lattice~$L$, endowed with the induced ordering.
For any \jirr\ element~$p$ in a finite distributive lattice~$D$, we denote by~$p_*$ the unique lower cover of~$p$ in~$D$, and by~$p^{\dagger}$ the largest element of~$D$ not above~$p$; so $p_*=p\wedge p^{\dagger}$.
The assignment $p\mapsto p^{\dagger}$ defines an order-isomorphism from~$\Ji{D}$ onto~$\Mi{D}$.

As in Wehrung~\cite{MV1,RAlg}, two elements~$a$ and~$b$ in a $0$-lattice (i.e., lattice with a bottom element)~$D$ are \emph{consonant} if there exist $u,v\in D$ such that $a\leq u\vee b$, $b\leq a\vee v$, and $u\wedge v=0$.
A subset~$X$ of~$D$ is consonant if any pair of elements in~$X$ is consonant.
The lattice~$D$ is \emph{completely normal} if it is consonant within itself.

We denote by~$\Ji{L}$ (resp., $\Mi{L}$) the set of all \jirr\ (resp., \mirr) elements in a lattice~$L$, endowed with the induced partial ordering.
The assignment $D\mapsto\Ji{D}$ is part of \emph{Birkhoff duality} between finite distributive lattices, with $0,1$-lattice homomorphisms, and finite posets, with isotone maps (cf. Gr\"atzer \cite[\S~II.1.3]{LTF}).
The Birkhoff dual of a $0,1$-lattice homomorphism $\gf\colon D\to E$ is the map $\Ji{E}\to\Ji{D}$, $q\mapsto q^{\gf}\eqdef\min\setm{x\in D}{q\leq\gf(x)}$.

For any distributive $0$-lattice~$D$, we denote by~$\BR(D)$ the generalized Boolean algebra \emph{R-generated by~$D$} in the sense of Gr\"atzer \cite[\S~II.4]{LTF} (aka the \emph{Boolean envelope} of~$D$).
Equivalently, $\BR(D)$ is the universal generalized Boolean algebra of~$D$.
Up to isomorphism, $\BR(D)$ is the unique generalized Boolean algebra generated by~$D$ as a $0$-sublattice.
The assignment $D\mapsto\BR(D)$ canonically extends to a functor, which turns $0$-lattice embeddings to embeddings of generalized Boolean algebras.
For a $0$-sublattice~$D$ of a distributive lattice~$E$ with~$0$, we will thus identify~$\BR(D)$ with its canonical image in~$\BR(E)$.
If~$D$ is a finite distributive lattice and $P\eqdef\Ji{D}$, then the assingment $x\mapsto P\dnw{x}$ defines an isomorphism from~$D$ onto the lattice $\Down{P}$ of all lower subsets of~$P$.
Since the universal Boolean algebra of~$\Down{P}$ is the powerset lattice of~$P$, with each $\set{p}=(\dnw{p})\setminus(\dnw{p})_*$, it follows that the atoms of~$\BR(D)$ are exactly the $p\wedge\neg p_*$ for $p\in\Ji{D}$.

\begin{lemma}\label{L:BasicBR}
The following statements hold, for any distributive $0$-lattice~$D$:
\begin{enumerater}
\item\label{a1b1a2b2}
For all $a_1,a_2,b_1,b_2\in D$, $a_1\wedge\neg b_1\leq a_2\wedge\neg b_2$ within~$\BR(D)$ if{f} $a_1\leq a_2\vee b_1$ and $a_1\wedge b_2\leq b_1$ within~$D$.

\item\label{abpp_*}
If~$D$ is finite, then $a\wedge\neg b=\bigvee\setm{p\wedge\neg p_*}{p\in\Ji{D}\,,\ p\leq a\,,\ p\nleq b}$ within~$\BR(D)$, whenever $a,b\in D$.
\end{enumerater}
\end{lemma}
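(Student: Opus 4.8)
The plan is to establish~\eqref{a1b1a2b2} by a short computation carried out inside a principal ideal of~$\BR(D)$ --- which, since~$\BR(D)$ is a generalized Boolean algebra, is a genuine Boolean algebra --- and then to deduce~\eqref{abpp_*} from~\eqref{a1b1a2b2} together with the atomicity of~$\BR(D)$ in the finite case. There is no serious obstacle here: the statement is essentially a bookkeeping exercise, and the only delicate points are that each expression $a\wedge\neg b$ must be read as a relative difference $a\setminus b$ and manipulated within a principal ideal of~$\BR(D)$, and, for~\eqref{abpp_*}, that the identification of~$\BR(D)$ with $\Pow{\Ji D}$ recorded just before the lemma be invoked accurately.

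For~\eqref{a1b1a2b2}, set $e\eqdef a_1\vee a_2\vee b_1\vee b_2$ and work in the Boolean algebra $B\eqdef\BR(D)\dnw e$, in which~$\neg$ means complementation relative to~$e$; note that $a_i\wedge\neg b_i=a_i\setminus b_i\leq a_i\leq e$ for $i\in\set{1,2}$, so that the inequality $a_1\wedge\neg b_1\leq a_2\wedge\neg b_2$ really does take place in~$B$. For the ``if'' direction, from $a_1\leq a_2\vee b_1$ one obtains $a_1\wedge\neg b_1\leq(a_2\vee b_1)\wedge\neg b_1=a_2\wedge\neg b_1\leq a_2$, and from $a_1\wedge b_2\leq b_1$ one obtains $\neg b_1\leq\neg a_1\vee\neg b_2$, hence $a_1\wedge\neg b_1\leq a_1\wedge(\neg a_1\vee\neg b_2)=a_1\wedge\neg b_2\leq\neg b_2$; putting the two bounds together yields $a_1\wedge\neg b_1\leq a_2\wedge\neg b_2$. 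Conversely, from $a_1\wedge\neg b_1\leq a_2\wedge\neg b_2$ the bound $a_1\wedge\neg b_1\leq a_2$ gives $a_1=(a_1\wedge b_1)\vee(a_1\wedge\neg b_1)\leq b_1\vee a_2$, while the bound $a_1\wedge\neg b_1\leq\neg b_2$ gives $a_1\wedge b_2\wedge\neg b_1=0$, that is, $a_1\wedge b_2\leq b_1$; since these last two inequalities involve only elements of~$D$, they hold in~$D$. All of this is routine Boolean-algebra manipulation, the single point of care being the legitimacy of passing to the ideal~$B$, which is exactly what the generalized-Boolean-algebra structure of~$\BR(D)$ grants.

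For~\eqref{abpp_*}, recall from the paragraph preceding the lemma that, $D$ being finite, $\BR(D)\cong\Pow{\Ji D}$ is a finite --- hence atomic --- Boolean algebra whose atoms are precisely the elements $p\wedge\neg p_*$ for $p\in\Ji D$. Therefore $a\wedge\neg b$ is the join of the atoms lying below it, and it suffices to verify that, for $p\in\Ji D$, one has $p\wedge\neg p_*\leq a\wedge\neg b$ if and only if $p\leq a$ and $p\nleq b$. By~\eqref{a1b1a2b2}, applied with $b_1=p_*$ (taking $a_2=a$, $b_2=0$ for the first half and $a_2=1$, $b_2=b$ for the second, where~$1$ is the top of~$D$), the inequality $p\wedge\neg p_*\leq a$ translates into $p\leq a\vee p_*$, and $p\wedge\neg p_*\leq\neg b$ translates into $p\wedge b\leq p_*$. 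Using that~$p$ is \jirr\ with unique lower cover~$p_*$, and that every element of~$D$ not above~$p$ lies below~$p^{\dagger}$ and hence below $p\wedge p^{\dagger}=p_*$: the condition $p\leq a\vee p_*$ forces $p\leq a$ or $p\leq p_*$, the latter being impossible, so it is equivalent to $p\leq a$; and $p\wedge b\leq p_*$ holds exactly when $p\nleq b$ (for $p\leq b$ gives $p\wedge b=p\nleq p_*$, whereas $p\nleq b$ gives $p\wedge b<p$, so $p\wedge b\leq p_*$). This yields the displayed identity. Alternatively, one can sidestep~\eqref{a1b1a2b2} and read~\eqref{abpp_*} off directly from the isomorphisms $D\cong\Down{\Ji D}$ and $\BR(D)\cong\Pow{\Ji D}$, under which $a\wedge\neg b$ corresponds to $\setm{p\in\Ji D}{p\leq a,\ p\nleq b}$, namely the union of the singletons $\set p$ over such~$p$.
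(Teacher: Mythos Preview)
Your argument is correct. The paper states this lemma without proof (it is treated as folklore), so there is nothing to compare your approach against; your reduction to a principal ideal of~$\BR(D)$ for~\eqref{a1b1a2b2} and your use of atomicity for~\eqref{abpp_*} are exactly the sort of routine verifications the authors are implicitly leaving to the reader.

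Two minor remarks. First, in~\eqref{abpp_*} you can apply~\eqref{a1b1a2b2} directly with $(a_1,b_1,a_2,b_2)=(p,p_*,a,b)$ to obtain $p\wedge\neg p_*\leq a\wedge\neg b\Leftrightarrow\bigl(p\leq a\vee p_*\text{ and }p\wedge b\leq p_*\bigr)$ in one step, avoiding the detour through the auxiliary choices $b_2=0$ and $a_2=1$. Second, the parenthetical clause ``every element of~$D$ not above~$p$ lies below~$p^{\dagger}$ and hence below $p\wedge p^{\dagger}=p_*$'' is literally false (an element below~$p^{\dagger}$ need not lie below~$p$), but you never actually use it: your subsequent verification that $p\wedge b<p$ implies $p\wedge b\leq p_*$ relies only on~$p_*$ being the unique lower cover of~$p$, which is correct. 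You should simply delete the~$p^{\dagger}$ remark.
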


\begin{lemma}\label{L:BR2Ineq}
Let~$D$ and~$L$ be distributive $0$-lattices with~$D$ finite, let $\gf\colon D\to L$ be a $0$-lattice homomorphism, let $a,b\in D$, and let $c\in L$.
Then $\gf(a)\leq\gf(b)\vee c$ if{f} $\gf(p)\leq\gf(p_*)\vee c$ whenever $p\in\Ji{D}$ with $p\leq a$ and $p\nleq b$.
\end{lemma}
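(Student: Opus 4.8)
The plan is to deduce the equivalence from the second assertion of Lemma~\ref{L:BasicBR} by working inside Boolean envelopes. By the universal property of~$\BR(D)$, the composite of~$\gf$ with the inclusion $L\hookrightarrow\BR(L)$ extends uniquely to a homomorphism $\psi\colon\BR(D)\to\BR(L)$ of generalized Boolean algebras; in particular~$\BR$ acts on arbitrary $0$-lattice homomorphisms, not only on the embeddings recalled above. Here $x\wedge\neg y$ denotes the relative complement of~$x\wedge y$ in~$x$ inside a generalized Boolean algebra, and I shall use freely the elementary identities $x\leq y\vee z\Leftrightarrow x\wedge\neg y\leq z$, $x=(x\wedge y)\vee(x\wedge\neg y)$, and $(y\vee z)\wedge\neg y=z\wedge\neg y$, valid in any such algebra.

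First I would translate both sides of the claimed equivalence into inequalities of the shape ``$u\wedge\neg v\leq c$'' inside~$\BR(L)$. Indeed, for any $u,v\in L$, one has $u\leq v\vee c$ in~$L$ if{f} $u\wedge\neg v\leq c$ in~$\BR(L)$: the forward implication follows from $u\wedge\neg v\leq(v\vee c)\wedge\neg v=c\wedge\neg v\leq c$, and the backward one from $u=(u\wedge v)\vee(u\wedge\neg v)\leq v\vee c$, using that~$L$ is a $0$-sublattice of~$\BR(L)$ (so an inequality between members of~$L$ that holds in~$\BR(L)$ holds already in~$L$). Instantiating this at $(u,v)=(\gf(a),\gf(b))$ shows that the left-hand side of the lemma amounts to $\gf(a)\wedge\neg\gf(b)\leq c$ in~$\BR(L)$, and instantiating it at $(u,v)=(\gf(p),\gf(p_*))$ for each relevant~$p$ shows that the right-hand side amounts to: $\gf(p)\wedge\neg\gf(p_*)\leq c$ for every $p\in\Ji{D}$ with $p\leq a$ and $p\nleq b$.

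It then remains to relate these two. Since~$\psi$ preserves relative complements and extends~$\gf$, we have $\gf(a)\wedge\neg\gf(b)=\psi(a\wedge\neg b)$ and $\gf(p)\wedge\neg\gf(p_*)=\psi(p\wedge\neg p_*)$, the elements $a\wedge\neg b$ and $p\wedge\neg p_*$ being formed in~$\BR(D)$. Now the second assertion of Lemma~\ref{L:BasicBR}---the only place where finiteness of~$D$ is used---gives the \emph{finite} join $a\wedge\neg b=\bigvee\setm{p\wedge\neg p_*}{p\in\Ji{D}\,,\ p\leq a\,,\ p\nleq b}$ in~$\BR(D)$; applying the join-preserving map~$\psi$ then yields $\gf(a)\wedge\neg\gf(b)=\bigvee\setm{\gf(p)\wedge\neg\gf(p_*)}{p\in\Ji{D}\,,\ p\leq a\,,\ p\nleq b}$ in~$\BR(L)$. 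As this is a finite join, it lies below~$c$ if{f} each of its terms does, which by the previous paragraph is exactly the right-hand side of the lemma. (If the index set is empty both sides hold trivially, the join being~$0$, i.e., $\gf(a)\leq\gf(b)$.)

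I do not anticipate any genuine obstacle: once one decides to work inside the Boolean envelopes, the argument is a straightforward combination of the second assertion of Lemma~\ref{L:BasicBR} with routine manipulations in generalized Boolean algebras. The two points deserving a moment's care are that the Boolean-envelope functor does act on non-injective homomorphisms (immediate from the universal property of~$\BR(D)$) and that the join in Lemma~\ref{L:BasicBR} is finite, so that ``$\leq c$'' may be tested term by term. Should one prefer to avoid~$\BR$, one may instead first reduce to the case $b\leq a$ (replacing~$a$ by~$a\vee b$ changes neither side of the equivalence) and then prove ``$\Leftarrow$'' by induction on the length of the interval~$[b,a]$: at each step pick~$a'$ with $b\leq a'\lessdot a$, note that the unique $p\in\Ji{D}$ with $p\leq a$ and $p\nleq a'$ satisfies moreover $p_*\leq a'$, and combine $\gf(a')\leq\gf(b)\vee c$ (induction hypothesis, applicable since $q\leq a'$ and $q\nleq b$ imply $q\leq a$) with $\gf(p)\leq\gf(p_*)\vee c\leq\gf(a')\vee c\leq\gf(b)\vee c$ to obtain $\gf(a)=\gf(a')\vee\gf(p)\leq\gf(b)\vee c$; the implication ``$\Rightarrow$'' is immediate from $\gf(p)\wedge\gf(b)=\gf(p\wedge b)\leq\gf(p_*)$, valid because $p\nleq b$ forces $p\wedge b\leq p_*$ in~$D$.
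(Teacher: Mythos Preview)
Your proof is correct and follows essentially the same approach as the paper's: both pass to the Boolean envelope via $\BR(\gf)$ (your~$\psi$), translate $\gf(a)\leq\gf(b)\vee c$ to $\BR(\gf)(a\wedge\neg b)\leq c$, apply Lemma~\ref{L:BasicBR}\eqref{abpp_*}, and translate back. Your exposition is more detailed than the paper's three-line version, and the alternative inductive argument you sketch at the end is a pleasant bonus but not needed.
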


\begin{proof}
$\gf(a)\leq\gf(b)\vee c$ if{f} $\BR(\gf)(a\wedge\neg b)\leq c$, if{f} $\BR(\gf)(p\wedge\neg p_*)\leq c$ whenever $p\in\Ji{D}$ such that~$p\leq a$ and~$p\nleq b$ (we apply Lemma~\ref{L:BasicBR}\eqref{abpp_*}).
Now $\BR(\gf)(p\wedge\neg p_*)\leq c$ if{f} $\gf(p)\leq\gf(p_*)\vee c$.
\end{proof}

For any elements~$x$ and~$y$ in a lattice~$E$ let $x\rightarrow_Ey$ denote the largest $z\in E$, if it exists, such that $x\wedge z\leq y$ (it is also called the \emph{pseudocomplement of $x$ relative to $y$}); so~$\rightarrow_E$ is the \emph{Heyting implication} on~$E$.
If~$\rightarrow_E$ is defined on every pair of elements then we say that~$E$ is a \emph{generalized Heyting algebra}.
If, in addition, $E$ has a bottom element, then we say that~$E$ is a \emph{Heyting algebra}.
Every Heyting algebra is a bounded distributive lattice, and every finite distributive lattice is a Heyting algebra%
\footnote{
Strictly speaking we should set the Heyting implication~$\rightarrow$ apart from the lattice signature, thus for example stating that ``every finite distributive lattice expands to a unique Heyting algebra''.
The shorter formulation, which we shall keep for the sake of simplicity, reflects a standard abuse of terminology that should not create any confusion here.
}
.

Dually, we denote by $x\sd_Ey$ the least $z\in E$, if it exists, such that $x\leq y\vee z$.
It is the \emph{dual pseudocomplement of $x$ relative to $y$}.

A lattice homomorphism $\gf\colon D\to E$ is \emph{closed} if whenever $a_0,a_1\in D$ and $b\in E$, if $\gf(a_0)\leq\gf(a_1)\vee b$, then there exists $x\in D$ such that $a_0\leq a_1\vee x$ and $\gf(x)\leq b$.
If~$\gf$ is an inclusion map we will say that~$D$ is a \emph{closed sublattice} of~$E$.

The following folklore lemma, whose easy proof we leave to the reader as an exercise, enables to read, on the Birkhoff dual, whether a given homomorphism, between finite distributive lattices, is a homomorphism of Heyting algebras or a closed homomorphism, respectively.

\begin{lemma}\label{L:ClosedJL}
The following statements hold, for any finite distributive lattices~$D$ and~$E$ and any $0,1$-lattice homomorphism $\gf\colon D\to E$:
\begin{enumerater}
\item\label{CharHey}
$\gf$ is a homomorphism of Heyting algebras if{f} for all $p\in\Ji{D}$ and all $q\in\Ji{E}$, if $p\leq q^{\gf}$, then there exists $x\in\Ji{E}$ such that $x\leq q$ and $x^{\gf}=p$.

\item\label{CharClosed}
$\gf$ is closed if{f} for all $p\in\Ji{D}$ and all $q\in\Ji{E}$, if $q^{\gf}\leq p$, then there exists $x\in\Ji{E}$ such that $q\leq x$ and $x^{\gf}=p$.
\end{enumerater}
\end{lemma}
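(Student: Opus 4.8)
The plan is to translate both statements through Birkhoff duality and to check them on the dual side, where each becomes a short computation with lower subsets. Put $P\eqdef\Ji D$ and $Q\eqdef\Ji E$, and let $g\colon Q\to P$, $q\mapsto q^{\gf}$, be the Birkhoff dual of~$\gf$; then~$g$ is isotone, and under the identifications $D\cong\Down P$ and $E\cong\Down Q$ given by $a\mapsto P\dnw a$ and $c\mapsto Q\dnw c$ the homomorphism~$\gf$ becomes the preimage map $A\mapsto g^{-1}(A)$ on lower subsets. Consequently $\gf(a_0)\les\gf(a_1)\vee b$, $\gf(x)\les b$, and ``there is $x\in D$ such that \ldots'' translate, respectively, to $g^{-1}(A_0)\subseteq g^{-1}(A_1)\cup B$, to $g^{-1}(X)\subseteq B$, and to ``there is a lower subset~$X$ of~$P$ such that \ldots'', writing~$A_0$ for the lower subset matching~$a_0$, and so on. Two elementary observations will be used: for $p\in P$, the lower cover~$p_*$ of~$p$ in~$D$ dominates every element of~$D$ that is~$<p$, so that $P\dnw p_*=(P\dnw p)\setminus\set{p}$; and the largest lower subset of~$P$ not containing a given point~$p\in P$ is the complement of $\setm{x\in P}{p\les x}$.

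\emph{Proof of~\eqref{CharHey}.} Since~$\gf$ is already a $0,1$-lattice homomorphism, it is a homomorphism of Heyting algebras if{f} it preserves~$\rightarrow$, i.e.\ $g^{-1}(A\rightarrow B)=g^{-1}(A)\rightarrow g^{-1}(B)$ for all lower subsets $A,B$ of~$P$, the relative pseudocomplements being computed in~$\Down P$ and~$\Down Q$. The inclusion~$\subseteq$ is automatic (it holds for every $0,1$-lattice homomorphism, by applying it to $A\cap(A\rightarrow B)\subseteq B$ and using preservation of meets). For~$\supseteq$, assuming the lifting property of~\eqref{CharHey} I would argue as follows: if $q\in Q$ satisfies $(Q\dnw q)\cap g^{-1}(A)\subseteq g^{-1}(B)$, then for every $p\in A$ with $p\les g(q)$, lifting~$p$ to some $x\les q$ in~$Q$ with $g(x)=p$ gives $x\in(Q\dnw q)\cap g^{-1}(A)$, hence $p=g(x)\in B$; so $g(q)\in A\rightarrow B$. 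Conversely, assuming $\gf$ preserves~$\rightarrow$ and given $p\les g(q)$, I would apply the preservation to $A\eqdef P\dnw p$ and $B\eqdef(P\dnw p)\setminus\set{p}=P\dnw p_*$: the condition $A\cap C\subseteq B$ on a lower subset~$C$ amounts to $p\notin C$, so $A\rightarrow B$ is the largest lower subset of~$P$ not containing~$p$; thus $p\les g(q)$ gives $g(q)\notin A\rightarrow B$, whence $q\notin g^{-1}(A)\rightarrow g^{-1}(B)$. Unwinding the latter produces $x\les q$ in~$Q$ with $g(x)\les p$ but $g(x)\nleq p_*$, and as $g(x)\in\Ji D$ this forces $g(x)=p$ --- the required lifting.

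\emph{Proof of~\eqref{CharClosed}.} Translated, closedness of~$\gf$ says that whenever $A_0,A_1$ are lower subsets of~$P$, $B$ a lower subset of~$Q$, and $g^{-1}(A_0)\subseteq g^{-1}(A_1)\cup B$, there is a lower subset~$X$ of~$P$ with $A_0\subseteq A_1\cup X$ and $g^{-1}(X)\subseteq B$. Assuming the lifting property of~\eqref{CharClosed} and given such $A_0,A_1,B$, I would take~$X$ to be the lower subset of~$P$ generated by $A_0\setminus A_1$, so that $A_0\subseteq A_1\cup X$ holds automatically; if $q\in g^{-1}(X)$, then $g(q)\les p_0$ for some $p_0\in A_0\setminus A_1$, and lifting~$q$ to some $x\geqslant q$ in~$Q$ with $g(x)=p_0$ places~$x$ in $g^{-1}(A_0)\setminus g^{-1}(A_1)$, hence in~$B$ by hypothesis, so $q\in B$ since~$B$ is a lower subset. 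Conversely, assuming $\gf$ closed and given $g(q)\les p$, I would apply closedness to $A_0\eqdef P\dnw p$, $A_1\eqdef P\dnw p_*$, and the lower subset~$B$ of~$Q$ generated by $g^{-1}\set{p}$: the hypothesis $g^{-1}(A_0)\subseteq g^{-1}(A_1)\cup B$ holds, since $g(q')\les p$ forces either $g(q')=p$ (so $q'\in g^{-1}\set{p}\subseteq B$) or $g(q')<p$ and hence $g(q')\les p_*$ (so $q'\in g^{-1}(A_1)$). The resulting~$X$ contains~$p$, because $A_0\subseteq A_1\cup X$ while $p\in A_0\setminus A_1$; so $g(q)\les p$ yields $g(q)\in X$, i.e.\ $q\in g^{-1}(X)\subseteq B$, which means $q\les x$ for some $x\in Q$ with $g(x)=p$ --- the required lifting.

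I expect the one slightly delicate point to be the faithful translation of ``closed'' onto the dual side --- notably, that the existential quantifier over~$D$ turns into one over lower subsets of~$P$ --- together with the bookkeeping around the test data $(P\dnw p,\,P\dnw p_*,\,B)$. Once the dictionary of the first paragraph is set up, both equivalences drop out at once, which is doubtless why the lemma is stated as folklore.
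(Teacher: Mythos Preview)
Your proof is correct. The paper does not actually prove this lemma --- it is introduced as ``folklore'' and explicitly left to the reader as an exercise --- so there is no argument to compare against; your route via Birkhoff duality, recasting both conditions as statements about the dual isotone map $g\colon Q\to P$ on lower subsets, is exactly the natural approach the paper's phrasing invites.
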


\subsection{The lattices $\Bool(\cF,\gO)$, $\Op(\cF,\gO)$, and $\Ops(\cF,\gO)$}
For more detail on this subsection we refer the reader to Wehrung \cite{MV1,RAlg}.
For a right vector space~$\EE$ over a totally ordered division ring~$\kk$, a map $f\colon\EE\to\nobreak\kk$ is an \emph{affine functional} if $f-f(0)$ is a linear functional.
Note that the affine functionals on~$\EE$ form a \emph{left} vector space over~$\kk$.

For functions~$f$ and~$g$ with common domain~$\gO$ and values in a poset~$T$, we set $\bck{f\leq g}\eqdef\setm{x\in\gO}{f(x)\leq g(x)}$; and similarly for $\bck{f<g}$, $\bck{f=g}$, $\bck{f\neq g}$, and so on.
Throughout this paper, $f$ and~$g$ will always be restrictions, to a convex set~$\gO$, of continuous affine functionals on a topological vector space~$\EE$ over a totally ordered division ring~$\kk$.
For a set~$\cF$ of maps from~$\gO$ to~$\kk$, we will denote by~$\Bool(\cF,\gO)$ the Boolean subalgebra of the powerset of~$\gO$ generated by all subsets~$\bck{f>0}$ and~$\bck{f<0}$ for $f\in\cF$.
As in~\cite{RAlg}, we will also denote by~$\Ops(\cF,\gO)$ the sublattice of~$\Bool(\cF,\gO)$ generated by all $\bck{f>0}$ and~$\bck{f<0}$ where $f\in\cF$, and then set $\Op(\cF,\gO)\eqdef\Ops(\cF,\gO)\cup\nobreak\set{\gO}$.
Evidently, $\Bool(\cF,\gO)$ is generated, as a Boolean algebra, by its $0$-sublattice $\Op(\cF,\gO)$; so $\Bool(\cF,\gO)=\BR\pI{\Op(\cF,\gO)}$.

For any set~$I$ and any totally ordered division ring~$\kk$, we will occasionally identify every element $a=\vecm{a_i}{i\in I}\in\kkp{I}$ with the corresponding (continuous) linear functional $\sum_{i\in I}a_i\gd_i$ (where~$\gd_i$ denotes the $i$th projection), thus justifying such notations as $\Bool(\kkp{I},\kkp{I})$ and $\Op(\kkp{I},\kkp{I})$;
observe that in those notations, the first (resp., second) occurrence of~$\kkp{I}$ is endowed with its structure of \emph{left} (resp., \emph{right}) vector space over~$\kk$.
Moreover, in its second occurrence, $\kkp{I}$ is endowed with the coarsest topology making all canonical projections~$\gd_i$ continuous.

Denote by~$\FL(I,\kk)$ the free left%
\footnote{
``Right'' and ``left'' appear to have been unfortunately mixed up at various places in~\cite{RAlg}, particularly on pages~12 and~13.
Since this is mostly a matter of choosing sides, that paper's results are unaffected.
We nonetheless attempt to fix this here.
}
$\kk$-vector lattice on a set~$I$.
As observed in Baker~\cite{Baker1968}, Bernau~\cite{Bern1969}, Madden \cite[Ch.~III]{MaddTh} (see also Wehrung \cite[page~13]{RAlg} for a summary), $\FL(I,\kk)$ canonically embeds into~$\kk^{\kkp{I}}$.
We sum up a few related facts.

\begin{lemma}[Folklore]\label{L:BakBey}\hfill
\begin{enumerater}
\item\label{FlIkk}
$\FL(I,\kk)$ is isomorphic to the sublattice of $\kk^{\kkp{I}}$ generated by all linear functionals $\sum_{i\in I}a_i\gd_i$ associated to elements $a\in\kkp{I}$, via the assignment $i\mapsto\gd_i$.

\item\label{IdcFlIkk}
The assignment $\seq{x}\mapsto\bck{x\neq0}$ defines an isomorphism from the lattice~$\Idc\FL(I,\kk)$, of all principal \lidl{s} of the left $\kk$-vector lattice~$\FL(I,\kk)$, onto $\Ops(\kkp{I},\kkp{I})$.
\end{enumerater}
\end{lemma}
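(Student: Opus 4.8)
The plan for~\eqref{FlIkk} is to recognize the asserted embedding as the universal one and then to compute its range. By the universal property of the free left $\kk$-vector lattice, the assignment $i\mapsto\gd_i$ extends to a unique $\kk$-vector lattice homomorphism $\eta\colon\FL(I,\kk)\to\kk^{\kkp I}$; by the Baker--Bernau--Madden representation recalled above, $\eta$ is one-to-one. Its range is, tautologically, the $\kk$-vector sublattice of $\kk^{\kkp I}$ generated by $\setm{\gd_i}{i\in I}$. Using the identities $z+(x\vee y)=(z+x)\vee(z+y)$ and $\gl\cdot(x\vee y)=(\gl x)\vee(\gl y)$ for $\gl\geq0$, together with their order-dual and sign-reversed versions (all valid in every $\kk$-vector lattice) to push additions and scalings inward, and then the lattice distributive law, one brings any element of that sublattice to the normal form $\bigvee_k\bigwedge_l\ell_{kl}$ in which each $\ell_{kl}$ is a linear functional $\sum_{i\in I}a_i\gd_i$ attached to some $a\in\kkp I$. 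Hence the range of~$\eta$ is exactly the $(\vee,\wedge)$-sublattice of $\kk^{\kkp I}$ generated by all such linear functionals, which is~\eqref{FlIkk}.

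For~\eqref{IdcFlIkk} I would identify $\FL(I,\kk)$ with its range $L$ via~\eqref{FlIkk} and study the map $\gq$ sending $\seq u$ to $\bck{u\neq0}$, where $u$ ranges over $L^+$; this is legitimate since $\seq x=\seq{|x|}$ and $\bck{x\neq0}=\bck{|x|\neq0}$, so every member of $\Idc\FL(I,\kk)$ has the required form. The lattice operations on $\kk^{\kkp I}$ being pointwise and $\eta$ preserving the order, $u\leq nv$ with $u\geq0$ forces $\bck{u\neq0}=\bck{u>0}\subseteq\bck{v>0}=\bck{v\neq0}$; since $\seq u=\seq v$ if{f} $u\leq mv$ and $v\leq nu$ for suitable $m,n\in\NN$ (for $u,v\in L^+$), this makes $\gq$ a well-defined isotone map. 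It is a $0$-lattice homomorphism because $\seq u\vee\seq v=\seq{u\vee v}$ and $\seq u\wedge\seq v=\seq{u\wedge v}$ in $\Idc\FL(I,\kk)$, whereas $\bck{u\vee v>0}=\bck{u>0}\cup\bck{v>0}$ and $\bck{u\wedge v>0}=\bck{u>0}\cap\bck{v>0}$ for nonnegative $u,v$. For surjectivity onto $\Ops(\kkp I,\kkp I)$, I would write an arbitrary $w\in L$ in the normal form $\bigvee_k\bigwedge_l\ell_{kl}$; since $0\in L$ we have $w\vee0\in L^+$ and $\gq\pI{\seq{w\vee0}}=\bck{w>0}=\bigcup_k\bigcap_l\bck{\ell_{kl}>0}$, and, as the $\ell_{kl}$ range over $\kkp I$ and $\bck{-\ell>0}=\bck{\ell<0}$, such sets are precisely the members of $\Ops(\kkp I,\kkp I)$.

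The single non-formal point --- and the step I expect to be the main obstacle --- is the injectivity of~$\gq$, equivalently the implication that $\bck{u>0}\subseteq\bck{v>0}$ entails $\seq u\subseteq\seq v$, i.e.\ $u\leq nv$ for some $n\in\NN$, for $u,v\in L^+$. I would first reduce this to the case of a finite~$I$: the functions $u$ and $v$ involve only finitely many coordinates, $\FL(J,\kk)$ embeds into $\FL(I,\kk)$ for every finite $J\subseteq I$, and neither of the two inequalities above is affected by the size of~$I$. For finite~$I$ the functions $u,v$ are nonnegative, positively homogeneous, continuous, piecewise-linear on $\kkp I$ with $\bck{v=0}\subseteq\bck{u=0}$, and the bound is obtained linear piece by linear piece: on each of the finitely many polyhedral cones on which $u$ and $v$ are both linear one derives a local bound $u\leq n_jv$ by a direct piecewise-linear estimate (over $\kk=\RR$ it amounts to compactness of a cross-section), and one takes the maximum of the finitely many multipliers~$n_j$. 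A bijective lattice homomorphism being an isomorphism, $\gq$ is then an isomorphism of $\Idc\FL(I,\kk)$ onto $\Ops(\kkp I,\kkp I)$. One may alternatively derive~\eqref{IdcFlIkk}, together with this estimate, from the function-lattice formalism developed in Wehrung~\cite{RAlg}.
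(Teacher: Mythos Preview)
The paper gives no proof of this lemma: it is labeled ``Folklore'' and the text merely cites Baker, Bernau, Madden, and \cite{RAlg} for the underlying representation. So there is no paper proof to compare against; your write-up is more than the paper itself offers, and the overall strategy (universal map plus normal form for~\eqref{FlIkk}; well-definedness, homomorphism, surjectivity, then injectivity via a cone-by-cone bound for~\eqref{IdcFlIkk}) is the standard one and is sound.

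Two points deserve tightening. First, you characterize $\seq u\subseteq\seq v$ by $u\leq nv$ with $n\in\NN$. That is the Abelian \lgrp\ definition; for $\kk$-vector lattices the paper explicitly takes \lidl{s} closed under scalar multiplication by~$\kk$, so the correct condition is $u\leq\lambda v$ for some $\lambda\in\kk^{+}$. Over a non-Archimedean~$\kk$ the $\NN$-version can fail even when the $\kk^{+}$-version holds, so your injectivity statement as written is not what you actually need (nor what your cone argument produces). Second, ``compactness of a cross-section'' is an $\RR$-only shortcut and does not survive to arbitrary totally ordered division rings. The honest argument on each maximal cone~$C$ of common linearity is the finite one you allude to: $C$ is polyhedral, hence generated by finitely many extreme rays $e_1,\dots,e_m$; on each ray either $v(e_j)=0$, forcing $u(e_j)=0$ by hypothesis, or $v(e_j)>0$, in which case set $\lambda_j\eqdef u(e_j)v(e_j)^{-1}\in\kk^{+}$. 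Taking $\lambda$ the maximum of the~$\lambda_j$ over all cones gives $u\leq\lambda v$ globally by linearity on each cone. With these two corrections your argument goes through; alternatively, as you note, one can simply invoke \cite[\S2--3]{RAlg}.
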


\section{Consonance kernels}\label{S:ConsKer}

In this section we introduce a tool, the \emph{consonance kernels}, expressing the consonance of the image a lattice homomorphism \emph{via} its behavior on \jirr\ elements.

\begin{definition}\label{D:ConsKer}
Let~$D$ and~$L$ be distributive lattices, with~$D$ finite and~$L$ with a zero element, and let $f\colon D\to L$ be a \jh.
Set $P\eqdef\Ji{D}$.
A \emph{consonance kernel} for~$f$ is a family  $\vecm{e_p}{p\in P}$ of elements of~$L$ such that
 \begin{align}
 f(p)=f(p_*)\vee e_p\,,&\qquad\text{whenever }p\in P\,;\label{Eq:pp*+ep}\\
 e_p\wedge e_q=0\,,&\qquad\text{whenever }p,q\in P
 \text{ are incomparable}\,.\label{Eq:Incomp2Orth}
 \end{align}
We then set $x\sdf_{\vec{e}}y\eqdef\bigvee\setm{e_p}{p\in(P\dnw x)\setminus(P\dnw y)}$, whenever $x,y\in D$.
\end{definition}

\begin{lemma}\label{L:CKDiffOp}
In the context of Definition~\textup{\ref{D:ConsKer}}, $f(x)=f(x\wedge y)\vee(x\sdf_{\vec{e}}y)$ whenever $x,y\in D$.
Moreover, $f$ is a lattice homomorphism.
\end{lemma}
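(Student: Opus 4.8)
The plan is to verify the two claimed equalities by a direct computation with lower subsets of $P=\Ji D$, exploiting the fact that a finite distributive lattice is isomorphic (via $x\mapsto P\dnw x$) to $\Down P$, so that every element $x\in D$ decomposes as $x=\bigvee(P\dnw x)$ and $f(x)=\bigvee_{p\le x}f(p)$ since $f$ is a \jh. First I would record the following telescoping observation: for each $x\in D$, iterating \eqref{Eq:pp*+ep} along a maximal chain of $P\dnw x$ gives $f(x)=\bigvee\setm{e_p}{p\in P\dnw x}$; indeed $f(p)=f(p_*)\vee e_p$ and $f(p_*)=\bigvee_{q\le p_*}f(q)$, so an easy induction on $\card(P\dnw x)$ yields $f(x)=\bigvee_{p\le x}e_p$. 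Granting this, the first assertion becomes almost formal: $f(x\wedge y)=\bigvee\setm{e_p}{p\in P\dnw(x\wedge y)}=\bigvee\setm{e_p}{p\in(P\dnw x)\cap(P\dnw y)}$, while $x\sdf_{\vec e}y=\bigvee\setm{e_p}{p\in(P\dnw x)\setminus(P\dnw y)}$ by definition, and the join of these two is $\bigvee\setm{e_p}{p\in P\dnw x}=f(x)$, because $(P\dnw x)\cap(P\dnw y)$ together with $(P\dnw x)\setminus(P\dnw y)$ partitions $P\dnw x$ (here I use $P\dnw(x\wedge y)=(P\dnw x)\cap(P\dnw y)$, which holds since $P\dnw$ is a lattice embedding into $\Down P$).

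For the second assertion — that $f$ is a lattice homomorphism — it remains to check $f(x\vee y)=f(x)\vee f(y)$ and $f(x\wedge y)=f(x)\wedge f(y)$ for all $x,y\in D$. The join equation is immediate from $f$ being a \jh. For the meet equation, I would again pass to the $e_p$-description: $f(x)\wedge f(y)=\bigl(\bigvee_{p\le x}e_p\bigr)\wedge\bigl(\bigvee_{q\le y}e_q\bigr)=\bigvee_{p\le x,\,q\le y}(e_p\wedge e_q)$ by distributivity of $L$. Now split the double join according to whether $p,q$ are comparable. If $p$ and $q$ are incomparable, \eqref{Eq:Incomp2Orth} kills the term. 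If $p\le q$ then $e_p\wedge e_q\le e_p$ and $p\le q\le y$, so $e_p\wedge e_q\le e_p\le\bigvee\setm{e_r}{r\le x\wedge y}=f(x\wedge y)$ since $p\le x$ and $p\le y$; symmetrically for $q\le p$. Hence $f(x)\wedge f(y)\le f(x\wedge y)$. The reverse inequality $f(x\wedge y)\le f(x)\wedge f(y)$ follows from $f$ being isotone (a \jh\ is automatically isotone), so equality holds.

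The main obstacle, such as it is, is getting the telescoping identity $f(x)=\bigvee\setm{e_p}{p\in P\dnw x}$ cleanly; once that is in hand everything else is routine lattice bookkeeping. One must be slightly careful that the induction is on the cardinality of the \emph{lower set} $P\dnw x$ and that when one peels off a maximal element $p$ of $P\dnw x$ one has $P\dnw(p^{\dagger}\wedge x)=(P\dnw x)\setminus\set p$ — but in a finite distributive lattice this is exactly the statement that $x$ has lower cover $x\wedge p^{\dagger}$ obtained by removing the single element $p$ from $P\dnw x$, which is standard Birkhoff duality. I expect no genuine difficulty beyond this, and the whole argument should be a few lines once the telescoping lemma is stated.
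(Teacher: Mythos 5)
Your overall strategy is sound and, once repaired, gives a clean alternative to the paper's argument, but the telescoping identity $f(x)=\bigvee\setm{e_p}{p\in P\dnw x}$ on which everything rests is false as stated. Definition~\ref{D:ConsKer} only assumes that $f$ is a \jh; it does \emph{not} assume $f(0)=0$, and condition~\eqref{Eq:pp*+ep} puts no constraint on $f(0)$. Your induction already breaks at the base case $x=0$ (where the right-hand side is the empty join, i.e.\ $0_L$, while $f(0)$ may be nonzero), and the identity then fails for nonzero~$x$ as well: take~$D$ the two-element chain, $L\eqdef\Pow\set{1,2}$, $f(0)=\set{1}$, $f(1)=\set{1,2}$; then $e_1\eqdef\set{2}$ is a consonance kernel, yet $\bigvee\setm{e_p}{p\leq 1}=\set{2}\neq f(1)$. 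This invalidates your formulas for $f(x\wedge y)$ and for $f(x)\wedge f(y)$ whenever $f(0)\neq0$. (That the hypothesis really is this weak is visible in the paper's own proof, which disposes of the case $p_*=0$ via the inequality $f(0)\leq f(x\wedge y)$ rather than via $f(0)=0$.)

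The repair is cheap: prove instead $f(x)=f(0)\vee\bigvee\setm{e_p}{p\in P\dnw x}$ --- the same induction now has a correct base case --- and note that the extra joinand is harmless: in the first assertion it is absorbed into $f(x\wedge y)$, and in the meet computation every cross term involving $f(0)$ lies below $f(0)\leq f(x\wedge y)$, so only the terms $e_p\wedge e_q$ matter and your case analysis goes through. With that change your proof is correct and differs from the paper's only in packaging: the paper never writes a closed formula for $f(x)$, instead choosing a minimal $p\in P\dnw x$ with $f(p)\nleq f(x\wedge y)\vee(x\sdf_{\vec{e}}y)$ and deriving a contradiction (which is exactly where the missing $f(0)$ is handled), and it obtains the meet identity from $(x\sdf_{\vec{e}}y)\wedge(y\sdf_{\vec{e}}x)=0$ together with distributivity, rather than from your pairwise analysis of the $e_p\wedge e_q$; both routes use \eqref{Eq:Incomp2Orth} in essentially the same way.
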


\begin{proof}
Setting $c\eqdef f(x\wedge y)\vee(x\sdf_{\vec{e}}y)$, it is obvious that $c\leq f(x)$.
In order to prove that $f(x)\leq c$, it suffices to prove that $f(p)\leq c$ whenever $p\in P\dnw x$.
By way of contradiction, let $p$ be a minimal element of $P\dnw x$ with $f(p)\nleq c$.
Since $p\leq y$ implies $f(p)\leq f(x\wedge y)\leq c$, we get $p\in (P\dnw x)\setminus(P\dnw y)$, so $f(p)=f(p_*)\vee e_p$.
Since $e_p\leq c$, we get $f(p_*)\nleq c$. The case $p_*=0$ is impossible, because $f(0)\leq f(x\wedge y)\leq c$.
Since~$f$ is a \jh, we get $f(p_*)=\bigvee\setm{f(q)}{q\in P\dnw p_*}$.
By the minimality assumption on~$p$, we get $f(q)\leq c$
for every $q\in P\dnw p_*$, hence $f(p_*)\leq c$, a contradiction.

Now let $x,y\in D$.
By the result of the paragraph above, $f(x)=f(x\wedge y)\vee(x\sdf_{\vec{e}}y)$ and $f(y)=f(x\wedge y)\vee(y\sdf_{\vec{e}}x)$.
Due to~\eqref{Eq:Incomp2Orth}, $(x\sdf_{\vec{e}}y)\wedge(y\sdf_{\vec{e}}x)=0$; whence $f(x)\wedge f(y)=f(x\wedge y)$.
\end{proof}

\begin{lemma}\label{L:ConsKer}
Let~$D$ and~$L$ be distributive lattices, with~$D$ finite and~$L$ with a zero element.
Then a lattice homomorphism $f\colon D\to L$ has a consonance kernel if{f} the range of~$f$ is consonant in~$L$.
\end{lemma}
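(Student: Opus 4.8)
The plan is to prove the two implications separately, the forward one being essentially a bookkeeping consequence of Lemma~\ref{L:CKDiffOp}. So assume first that $f$ admits a consonance kernel $\vec{e}=\vecm{e_p}{p\in P}$, and let $a,b\in D$; I must show that $f(a)$ and $f(b)$ are consonant. By Lemma~\ref{L:CKDiffOp}, $f(a)=f(a\wedge b)\vee(a\sdf_{\vec{e}}b)$ and $f(b)=f(a\wedge b)\vee(b\sdf_{\vec{e}}a)$, while its proof already records that $(a\sdf_{\vec{e}}b)\wedge(b\sdf_{\vec{e}}a)=0$. Putting $u\eqdef a\sdf_{\vec{e}}b$ and $v\eqdef b\sdf_{\vec{e}}a$, and using $f(a\wedge b)\leq f(b)$ and $f(a\wedge b)\leq f(a)$, we obtain $f(a)\leq f(b)\vee u$, $f(b)\leq f(a)\vee v$, and $u\wedge v=0$, so $f(a)$ and $f(b)$ are consonant; as $a,b$ were arbitrary, the range of $f$ is consonant.

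For the converse, assume the range of $f$ is consonant. For every unordered pair $\set{p,q}$ of incomparable elements of $P$, use the consonance of $f(p)$ and $f(q)$ to fix elements $s(p;q),s(q;p)\in L$ with $f(p)\leq s(p;q)\vee f(q)$, $f(q)\leq f(p)\vee s(q;p)$, and $s(p;q)\wedge s(q;p)=0$. Then set, for each $p\in P$,
\[
e_p\eqdef\bigwedge\pI{\set{f(p)}\cup\setm{s(p;q)}{q\in P,\ q\text{ incomparable to }p}}\,,
\]
a meet over a nonempty finite subset of $L$, hence a well-defined element of $L$ with $e_p\leq f(p)$. The claim is that $\vec{e}=\vecm{e_p}{p\in P}$ is a consonance kernel for $f$. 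Condition~\eqref{Eq:Incomp2Orth} is then immediate: if $p,q\in P$ are incomparable, then $e_p\leq s(p;q)$ and $e_q\leq s(q;p)$, so $e_p\wedge e_q\leq s(p;q)\wedge s(q;p)=0$.

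For~\eqref{Eq:pp*+ep}, fix $p\in P$ and $q\in P$ incomparable to $p$. Since $p\leq q$ would contradict incomparability, $p\wedge q$ is not above $p$, so by the very definition of $p^{\dagger}$ we have $p\wedge q\leq p^{\dagger}$, whence $p\wedge q\leq p\wedge p^{\dagger}=p_*$, and therefore $f(p)\wedge f(q)=f(p\wedge q)\leq f(p_*)$ as $f$ is a lattice homomorphism. Combined with $f(p)\leq s(p;q)\vee f(q)$ and distributivity of $L$, this gives
\[
f(p)=f(p)\wedge\pI{s(p;q)\vee f(q)}=\pI{f(p)\wedge s(p;q)}\vee\pI{f(p)\wedge f(q)}\leq s(p;q)\vee f(p_*)\,.
\]
Taking the meet of these inequalities over all $q$ incomparable to $p$, together with the trivial $f(p)\leq f(p)\vee f(p_*)$, and pulling $f(p_*)$ out of the resulting meet by distributivity, yields $f(p)\leq f(p_*)\vee e_p$; since $e_p\leq f(p)$ and $f(p_*)\leq f(p)$, we conclude $f(p_*)\vee e_p=f(p)$. (By Lemma~\ref{L:CKDiffOp}, the existence of $\vec e$ re-confirms that $f$ is a lattice homomorphism, in accordance with the hypothesis.)

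The step I expect to be the crux is precisely the orthogonality~\eqref{Eq:Incomp2Orth} in the converse direction, and it is what makes the obvious attempt — picking for each $p$ a single witness for the consonance of $f(p)$ and $f(p^{\dagger})$ — fail, since such a witness has no reason to be orthogonal to the witnesses attached to the other \jirr\ elements incomparable to $p$. Intersecting in $e_p$ the $p$-side witnesses over \emph{all} incomparable partners $q$ repairs this (finiteness of $P$ making the intersection a legitimate meet in $L$), and the inequality $p\wedge q\leq p_*$ is exactly what keeps that intersection large enough for $f(p_*)\vee e_p$ to recover $f(p)$.
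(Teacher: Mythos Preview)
Your proof is correct. The forward direction is the same as the paper's, via Lemma~\ref{L:CKDiffOp}.

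For the converse, the paper takes a different route: it first observes that since~$f[D]$ is a finite consonant subset of~$L$, one may enclose it in a \emph{finite} distributive $0$-sublattice~$K$ of~$L$ in which~$f[D]$ is still consonant (take the sublattice generated by~$f[D]$ together with a finite supply of consonance witnesses). In~$K$ the dual relative pseudocomplement~$\sd_K$ exists, and the paper simply sets $e_p\eqdef f(p)\sd_K f(p_*)$. Orthogonality then follows from $e_p\leq f(p)\sd_K f(q)$ and $e_q\leq f(q)\sd_K f(p)$ (using $p\wedge q\leq p_*$, $q_*$), together with the fact that consonance in~$K$ forces $\pI{f(p)\sd_K f(q)}\wedge\pI{f(q)\sd_K f(p)}=0$. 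Your construction avoids the auxiliary sublattice~$K$ altogether: you work directly in~$L$, choosing pairwise consonance witnesses~$s(p;q)$ and intersecting them into~$e_p$. This is more elementary and self-contained, at the cost of making the verification of~\eqref{Eq:pp*+ep} a short explicit computation rather than an immediate consequence of the definition of~$\sd_K$. The paper's approach, in turn, makes the canonical nature of the choice visible (each~$e_p$ is a least element with the right property inside~$K$), which fits well with the later uses of difference-like operations. Either way the key inequality $p\wedge q\leq p_*$ for incomparable~$p,q$ is what drives the argument.
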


\begin{proof}
Suppose first that the range of~$f$ is consonant in~$L$.
Since~$D$ is finite, there exists a finite $0$-sublattice~$K$ of~$L$, containing~$f[D]$, such that the range of~$f$ is consonant in~$K$.
Setting $e_p\eqdef f(p)\sd_Kf(p_*)$ for each $p\in\Ji{D}$, Condition~\eqref{Eq:pp*+ep} is obviously satisfied.
Let $p,q\in\Ji{D}$ be incomparable.
{F}rom $p\wedge q\leq p_*$ we get
 \[
 e_p=f(p)\sd_Kf(p_*)\leq f(p)\sd_Kf(p\wedge q)=
 f(p)\sd_K(f(p)\wedge f(q))=f(p)\sd_Kf(q)\,,
 \]
and, similarly, $e_q\leq f(q)\sd_Kf(p)$.
Since~$f(p)$ and~$f(q)$ are consonant within~$K$, we get $(f(p)\sd_Kf(q))\wedge(f(q)\sd_Kf(p))=0$; whence $e_p\wedge e_q=0$.

Let, conversely, $\vecm{e_p}{p\in\Ji{D}}$ be a consonance kernel for~$f$ and set $P\eqdef\Ji{D}$.
Let $x,y\in D$, set $u\eqdef x\sdf_{\vec{e}}y$ and $v\eqdef y\sdf_{\vec{e}}x$.
It follows from Lemma~\ref{L:CKDiffOp} that $f(x)\leq f(y)\vee u$ and $f(y)\leq f(x)\vee v$.
Moreover, for all $p\in(P\dnw x)\setminus(P\dnw y)$ and $q\in(P\dnw y)\setminus(P\dnw x)$, $p$ and~$q$ are incomparable, thus $e_p\wedge e_q=0$; whence $u\wedge v=0$.
Therefore, the pair $(u,v)$ witnesses the consonance of~$f(x)$ and~$f(y)$ in~$L$.
\end{proof}

\section{An extension lemma for infinite distributive lattices}
\label{S:ExtLemma}

This section's main result, Lemma~\ref{L:MainExt}, states conditions under which a homomorphism $f\colon D\to L$ of distributive lattices can be extended to a homomorphism $f\colon E\to L$ in case~$E$ is generated over~$D$ by two disjoint elements~$a$ and~$b$.
One of its main improvements, over the original \cite[Lemma~4.2]{MV1} it stems from, is the possibility of~$D$ be infinite.

\begin{definition}\label{D:SemiHeyting}
A $0,1$-sublattice~$D$ of a bounded distributive lattice~$E$ is a \emph{semi-Heyting sublattice} if for all $x,y\in D$, $x\rightarrow_Dy$ and $x\rightarrow_Ey$ both exist and are equal.
\end{definition}

In particular, every semi-Heyting sublattice of~$E$ is a Heyting algebra ($E$ itself may not be a Heyting algebra).

\begin{notation}\label{Not:fv(a)}
Let~$D$ be a finite $0,1$-sublattice of a bounded distributive lattice~$E$ and let $f\colon D\to L$ be a $0$-lattice homomorphism.
We set
 \[
 f_{\vec{e}}(a)\eqdef\bigvee\setm{e_p}{p\in\Ji{D}\,,\ p\leq p_*\vee a}
 \]
for every consonance kernel~$\vec{e}$ of~$f$ and every $a\in E$.
\end{notation}

The following lemma arises from Wehrung \cite[Remark~4.6]{RAlg}.
We include a proof for convenience.

\begin{lemma}\label{L:pqincomp}
Let~$D$ be a finite semi-Heyting sublattice of a bounded distributive lattice~$E$, let $f\colon D\to L$ be a $0$-lattice homomorphism, and let $a,b\in E$ such that $a\wedge b=0$.
Then any \jirr\ elements~$p$ and~$q$ in~$D$ such that $p\leq p_*\vee a$ and $q\leq q_*\vee b$ are incomparable.
In particular, $f_{\vec{e}}(a)\wedge f_{\vec{e}}(b)=0$ for any consonance kernel~$\vec{e}$ for~$f$.
\end{lemma}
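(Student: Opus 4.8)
The plan is to prove that $p$ and $q$ are incomparable by contradiction, isolating the single step where the semi-Heyting hypothesis intervenes. So assume $p$ and $q$ are comparable. First I would record, using only $a\wedge b=0$, that $q\wedge a\leq q_*$: since $q\leq q_*\vee b$ we have $q\wedge a=q\wedge a\wedge(q_*\vee b)$, and distributing the right-hand side, then using $a\wedge b=0$, collapses it to $q\wedge a\wedge q_*\leq q_*$. Symmetrically, $p\leq p_*\vee a$ and $a\wedge b=0$ give $p\wedge b\leq p_*$.

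Next I would invoke the semi-Heyting hypothesis. Since $q,q_*\in D$, the relative pseudocomplement $q\rightarrow_Eq_*$ exists and equals $q\rightarrow_Dq_*$; moreover $q\rightarrow_Dq_*\leq q^{\dagger}$, since any $z\geq q$ satisfies $q\wedge z=q\nleq q_*$ whereas $q\wedge q^{\dagger}=q_*$ (in fact $q\rightarrow_Dq_*=q^{\dagger}$, but only the inequality is needed). As $q\wedge a\leq q_*$ and $q\rightarrow_Eq_*$ is by definition the largest element of $E$ whose meet with $q$ lies below $q_*$, it follows that $a\leq q\rightarrow_Eq_*=q\rightarrow_Dq_*\leq q^{\dagger}$. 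Symmetrically $b\leq p^{\dagger}$.

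Then I would close the argument. The two comparability cases are interchanged by the simultaneous exchange $(p,a)\leftrightarrow(q,b)$, which leaves the hypotheses invariant, so I may assume $p\leq q$. Because $x\mapsto x^{\dagger}$ is an order-isomorphism of $\Ji{D}$ onto $\Mi{D}$, this yields $p^{\dagger}\leq q^{\dagger}$, hence $b\leq p^{\dagger}\leq q^{\dagger}$; since also $q_*=q\wedge q^{\dagger}\leq q^{\dagger}$, the hypothesis $q\leq q_*\vee b$ forces $q\leq q^{\dagger}$, contradicting the fact that $q^{\dagger}$ is not above $q$. (In the dual case $q\leq p$ one has instead $a\leq q^{\dagger}\leq p^{\dagger}$ and then $p\leq p_*\vee a\leq p^{\dagger}$, the same contradiction.) This proves incomparability. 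For the final assertion, since $D$ is finite and $L$ distributive, $f_{\vec{e}}(a)\wedge f_{\vec{e}}(b)$ equals the finite join of the elements $e_p\wedge e_q$ over the pairs $p,q\in\Ji{D}$ with $p\leq p_*\vee a$ and $q\leq q_*\vee b$; every such pair consists of incomparable elements by what we just proved, so $e_p\wedge e_q=0$ by~\eqref{Eq:Incomp2Orth}, whence $f_{\vec{e}}(a)\wedge f_{\vec{e}}(b)=0$.

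The delicate point---short as it is---lies in handling $a$ and $b$, which live in $E$ rather than in $D$: one cannot residuate by them inside $E$, but the inequality $q\wedge a\leq q_*$ has both sides trapped between elements of $D$, so it can be residuated within $D$ and transported back to $E$ exactly because $D$ is a semi-Heyting sublattice of $E$. The rest is routine distributive-lattice bookkeeping, and the apparent split into the two cases $p\leq q$ and $q\leq p$ reduces to one case by the built-in symmetry of the hypotheses.
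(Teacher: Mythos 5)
Your proof is correct and takes essentially the same route as the paper's: in the case $p\leq q$ you derive $p\wedge b\leq p_*$ from $a\wedge b=0$, use the semi-Heyting hypothesis to get $b\leq p\rightarrow_E p_*=p\rightarrow_D p_*\leq p^{\dagger}\leq q^{\dagger}$, and conclude $q\leq q_*\vee b\leq q^{\dagger}$, the same contradiction the paper reaches. The extra material (the symmetric inequality $q\wedge a\leq q_*$, the explicit symmetry reduction, and the spelled-out distributivity computation for the ``in particular'' clause) is all sound and merely more detailed than the paper's version.
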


\begin{proof}
Suppose otherwise, say $p\leq q$; thus $p^{\dagger}\leq\nobreak q^{\dagger}$.
{F}rom $a\wedge b=0$ we get $p\wedge b\leq(p_*\vee a)\wedge b=p_*\wedge b\leq p_*$, thus, by assumption, $b\leq p\rightarrow_Ep_*=p\rightarrow_{D}p_*=p^{\dagger}$.
Since $p^{\dagger}\leq q^{\dagger}$, we get $b\leq q^{\dagger}$, so $q\leq q_*\vee b\leq q^{\dagger}$, a contradiction.
\end{proof}

We are now reaching this section's main goal. In the next proof we use the following well known extension criterion.
Let~$D$ and~$L$ be distributive lattices and~$X$ a generating subset of~$D$.
Then a map $f\colon X\to L$ can be
extended to a (necessarily unique) homomorphism $g\colon D\to L$ if and only if
\begin{equation}\bigwedge_{i=1}^mx_i\leq\bigvee_{j=1}^ny_j\quad\Longrightarrow\quad
\bigwedge_{i=1}^mf(x_i)\leq\bigvee_{j=1}^nf(y_j)\label{extcrit}
\end{equation}
for all $m,n>0$ and all $x_1,\dots,x_m,y_1,\dots,y_n\in X$.

\begin{lemma}[Main Extension Lemma]\label{L:MainExt}
Let~$D$ be a semi-Heyting sublattice of a bounded distributive lattice~$E$ and let $a,b\in E$.
Setting $B\eqdef\BR(D)$, we assume the following:
\begin{enumerater}
\item\label{E=Dab}
$E$ is generated, as a lattice, by~$D\cup\set{a,b}$.

\item\label{ammb=0}
$a\wedge b=0$.

\item\label{abcBD}
All elements~$a_B$, $b_B$, $(a\vee b)_B$, $a^B$, and~$b^B$ are defined.

\item\label{aveebB}
$(a\vee b)_B=a_B\vee b_B$.

\end{enumerater}

Then \begin{equation}\label{Eq:c^BDD'}
 c^B\in D\text{ whenever }c\in\set{a,b,a\vee b}\,.
 \end{equation}
Further, for every $0$-lattice homomorphism $f\colon D\to L$ and all $\ga,\gb\in L$, the following
conditions are equivalent.
\begin{enumeratei}
\item $(\ga,\gb)=(g(a),g(b))$ for some lattice homomorphism $g\colon E\to L$ extending~$f$;
\item  $\ga\leq f(a^B)$, $\gb\leq f(b^B)$, $\ga\wedge\gb=0$, $\BR(f)\pI{a_B}\leq\ga$, and $\BR(f)\pI{b_B}\leq\gb$.
\end{enumeratei}

Moreover, for  any finite semi-Heyting sublattice~$D'$ of~$D$ such that $\set{a_B,b_B}\subseteq\BR(D')$ and $\set{a^B,b^B}\subseteq D'$, and any consonance kernel~$\vec{e}$ of~$f'\eqdef f\res_{D'}$, $(f'_{\vec{e}}(a),f'_{\vec{e}}(b))$ is a pair satisfying (ii).

%
%
%
%
%
%

\end{lemma}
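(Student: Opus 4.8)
The plan is to establish the three assertions of the lemma in order: first the equation~\eqref{Eq:c^BDD'}, then the equivalence (i)$\Leftrightarrow$(ii), and finally the ``Moreover'' clause, which is the real payload.

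\emph{Step 1: Verifying~\eqref{Eq:c^BDD'}.} We have $c^B\in B=\BR(D)$ by hypothesis~\eqref{abcBD}, so it suffices to show $c^B\in D$. The key observation is that $D$ is a $0,1$-sublattice of $E$ (being semi-Heyting) and hence contains $\top_E$, so every $c^B$ lies below $\top_E\in D$; the point is rather that the \emph{least} upper bound of $c$ in $B$ already lies in the sublattice~$D$. For this I would use that $D$ is a semi-Heyting sublattice, so $D$ is relatively complete on the relevant side inside $B$ in the sense needed: given $c\in\set{a,b,a\vee b}$, consider $d\eqdef$ the meet in $D$ of all $x\in D$ with $c\le x$ (a finite-looking meet once one restricts to a suitable finite semi-Heyting sublattice, or directly using that $c^B$ exists and comparing). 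One shows $c^B$ and $d$ coincide by a Boolean-algebra computation: $c^B$ is a join of atoms $p\wedge\neg p_*$ of $B$ (for $p$ ranging over an appropriate subset of $\Ji{D'}$ for a finite semi-Heyting $D'\ni c^B$), and using Lemma~\ref{L:pqincomp} together with the semi-Heyting property one identifies this join with an element of $D$. I expect this step to be short once the right finite $D'$ is fixed.

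\emph{Step 2: The equivalence (i)$\Leftrightarrow$(ii).} The implication (i)$\Rightarrow$(ii) is routine: if $g\colon E\to L$ extends $f$, then $a\le a^B$ gives $\ga=g(a)\le g(a^B)=f(a^B)$ (using $a^B\in D$ from Step~1), similarly for $b$; $\ga\wedge\gb=g(a\wedge b)=g(0)=0$ by~\eqref{ammb=0}; and $a_B\le a$ in $B=\BR(E)$ gives $\BR(f)(a_B)=\BR(g)(a_B)\le\BR(g)(a)=\ga$ after checking $\BR(g)$ restricts correctly to $\BR(f)$ on $\BR(D)$, similarly for $b$. For (ii)$\Rightarrow$(i), the heart is the extension criterion~\eqref{extcrit}: since $E$ is generated by $D\cup\set{a,b}$ (hypothesis~\eqref{E=Dab}), any element of $E$ can be written in terms of finitely many elements of $D$ together with $a,b$, and one must check that every lattice inequality $\bigwedge x_i\le\bigvee y_j$ among such generators is preserved when $a\mapsto\ga$, $b\mapsto\gb$, $D\ni d\mapsto f(d)$. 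Using distributivity one reduces to inequalities of a normal form involving at most one occurrence of $a$ and one of $b$ on each side; the constraints in (ii), the identities $c^B\in D$, and hypothesis~\eqref{aveebB} are exactly what is needed to discharge these. This is essentially the argument of~\cite[Lemma~4.2]{MV1} adapted to the present hypotheses, and I would model the case analysis on that proof.

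\emph{Step 3: The ``Moreover'' clause --- the main obstacle.} Fix a finite semi-Heyting sublattice $D'\subseteq D$ with $\set{a_B,b_B}\subseteq\BR(D')$ and $\set{a^B,b^B}\subseteq D'$, and a consonance kernel $\vec{e}$ of $f'=f\res_{D'}$; I must show $(\ga,\gb)\eqdef(f'_{\vec e}(a),f'_{\vec e}(b))$ satisfies (ii). The orthogonality $\ga\wedge\gb=0$ is \emph{exactly} Lemma~\ref{L:pqincomp}, applied to $D'$ (which is finite semi-Heyting) and $f'$: the defining index sets $\setm{p\in\Ji{D'}}{p\le p_*\vee a}$ and $\setm{q\in\Ji{D'}}{q\le q_*\vee b}$ consist of pairwise incomparable elements, so the corresponding $e_p$'s and $e_q$'s are pairwise orthogonal by~\eqref{Eq:Incomp2Orth}, forcing $f'_{\vec e}(a)\wedge f'_{\vec e}(b)=0$. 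For $\ga\le f(a^B)$: since $a^B\in D'$ and $a\le a^B$, any $p\in\Ji{D'}$ with $p\le p_*\vee a$ satisfies $p\le p_*\vee a^B$, and because $a^B\in D'$ one has $p_*\vee a^B\in D'$; a short argument (Lemma~\ref{L:BR2Ineq} applied inside $D'$, or directly) shows $p\le p_*\vee a^B$ with $p\jirr$ forces $p\le a^B$, whence $e_p\le f'(p)=f(p)\le f(a^B)$ by~\eqref{Eq:pp*+ep}; taking the join gives $\ga\le f(a^B)$, and symmetrically $\gb\le f(b^B)$. For $\BR(f)(a_B)\le\ga$: write $a_B\in\BR(D')$ as a join of atoms $p\wedge\neg p_*$ of $\BR(D')$ over $p$ in some set $S\subseteq\Ji{D'}$ (Lemma~\ref{L:BasicBR}\eqref{abpp_*}); since $a_B\le a$ in $\BR(E)$, each such $p$ satisfies $p\wedge\neg p_*\le a$, i.e.\ $p\le p_*\vee a$ by Lemma~\ref{L:BasicBR}\eqref{a1b1a2b2}, so $p$ belongs to the index set defining $\ga=f'_{\vec e}(a)$, giving $e_p\le\ga$; since $\BR(f)(p\wedge\neg p_*)=f(p)\sd f(p_*)$-type expression evaluates, using~\eqref{Eq:pp*+ep}, to something $\le e_p$ (here is the one spot needing care --- matching $\BR(f')$ on atoms with the kernel elements $e_p$, which follows because $f'(p)=f'(p_*)\vee e_p$ forces $\BR(f')(p\wedge\neg p_*)\le e_p$ in $L$), we conclude $\BR(f)(a_B)=\bigvee_{p\in S}\BR(f')(p\wedge\neg p_*)\le\bigvee_{p\in S}e_p\le\ga$; symmetrically for $b$.

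I expect the friction points to be: (a) correctly translating between the Boolean-algebra picture (atoms $p\wedge\neg p_*$ of $\BR(D')$) and the $\sd$/$\sdf$-picture, in particular the inequality $\BR(f')(p\wedge\neg p_*)\le e_p$; and (b) keeping track of which sublattice ($D$, $D'$, $B$, $\BR(D')$) each computation lives in, especially ensuring the semi-Heyting hypothesis is invoked at the finite level $D'$ where Lemma~\ref{L:pqincomp} is available. Hypothesis~\eqref{aveebB} is used (together with Step~1) to guarantee consistency of the constraints on $\ga\vee\gb$ versus $\ga$ and $\gb$ separately, which is implicit in the verification of (ii) via the extension criterion in Step~2.
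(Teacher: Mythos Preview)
Your Steps~2 and~3 are on target and essentially match the paper's argument. In Step~3 your identification $\BR(f')(p\wedge\neg p_*)=f'(p)\wedge\neg f'(p_*)\le e_p$ (from $f'(p)=f'(p_*)\vee e_p$) is exactly the computation the paper carries out, and your use of Lemma~\ref{L:pqincomp} for $\ga\wedge\gb=0$ and the \jirry\ of~$p$ for $\ga\le f(a^B)$ are correct. In Step~2 the paper does the case analysis you only gesture at: after reducing (via $f$ being a homomorphism) to one generator from~$D$ on each side, and using $a\wedge b=0$ to collapse $x\wedge a\le y\vee b$ to $x\wedge a\le y$, there are five nontrivial implications to check; the one of the form $x\wedge a\le y\Rightarrow f(x)\wedge\ga\le f(y)$ is where the semi-Heyting hypothesis and~\eqref{Eq:c^BDD'} are invoked (from $x\wedge a\le y$ get $a\le x\rightarrow_E y=x\rightarrow_D y$, hence $a^B\le x\rightarrow_D y$), and hypothesis~\eqref{aveebB} handles $x\le y\vee a\vee b$.

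Step~1, however, is a real gap. Neither ``join of atoms $p\wedge\neg p_*$'' nor Lemma~\ref{L:pqincomp} is the right tool here: a join of such atoms is a generic element of $\BR(D')$ and there is no reason it should land in~$D$, and Lemma~\ref{L:pqincomp} concerns incomparability, not membership in~$D$. The paper's argument is direct and does not pass through a finite~$D'$ at all: since $c^B\in\BR(D)$, write it in conjunctive normal form $c^B=\bigwedge_{i<n}(\neg u_i\vee v_i)$ with $u_i,v_i\in D$ (possible because~$D$ is a $0,1$-sublattice). For each~$i$, $c\le\neg u_i\vee v_i$ in~$\BR(E)$ gives $u_i\wedge c\le v_i$ in~$E$, hence $c\le u_i\rightarrow_E v_i=u_i\rightarrow_D v_i$ by the semi-Heyting hypothesis. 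Setting $w\eqdef\bigwedge_{i<n}(u_i\rightarrow_D v_i)\in D$ one gets $c\le w$; conversely each $u_i\wedge w\le v_i$ gives $w\le\neg u_i\vee v_i$ in~$B$, so $w\le c^B$. Thus $c^B=w\in D$. This is where the semi-Heyting hypothesis on~$D$ (as opposed to~$D'$) is actually used.
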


\begin{note}
By the same token as the one used in the proof of Lemma~\ref{L:BR2Ineq}, the condition that $\BR(f)(a_B)\leq\ga$ is equivalent to saying that for all $x,y\in D$, $x\leq y\vee a\Rightarrow f(x)\leq f(y)\vee\ga$.
By Lemma~\ref{L:BR2Ineq}, if~$D$ is finite, then it suffices to restrict ourselves to the case where $x=p\in\Ji{D}$ and $y=p_*$.
Note that~$\BR(f)(a_B)$ is an element of~$\BR(L)$, usually not in~$L$, so it cannot be taken as the lowest possible value of~$\ga$ \emph{a priori}.
\end{note}

\begin{proof}
We start by proving \eqref{Eq:c^BDD'}.
By~\eqref{abcBD}, there is an expression of the form $c^B=\bigwedge_{i<n}(\neg u_i\vee v_i)$ (within~$B$) where $n<\go$ and all $u_i,v_i\in D$.
For each $i<n$, $c\leq\neg u_i\vee v_i$ within~$\BR(E)$, thus $u_i\wedge c\leq v_i$, and thus, since~$D$ is a semi-Heyting sublattice of~$E$, $c\leq u_i\rightarrow_Ev_i=u_i\rightarrow_{D}v_i$; whence, setting $w\eqdef\bigwedge_{i<n}(u_i\rightarrow_{D}v_i)$, we get $c\leq w$.
For each $i<n$, $w\leq u_i\rightarrow_{D}v_i$ with $w\in D$, thus $u_i\wedge w\leq v_i$, so $w\leq\neg u_i\vee v_i$ within~$B$, and so $w\leq c^B$.
Since $w\in D$, it follows that $w=c^B=c^D$.

Now it is obvious that for every lattice homomorphism $g\colon E\to L$ extending~$f$, the pair $(\ga,\gb)\eqdef(g(a),g(b))$ satisfies $\ga\leq f(a^B)$, $\gb\leq f(b^B)$, $\ga\wedge\gb=0$, $\BR(f)\pI{a_B}\leq\ga$, and $\BR(f)\pI{b_B}\leq\gb$.
Let, conversely, $(\ga,\gb)$ be such a pair.
\setcounter{claim}{0}

We need to show the implication \eqref{extcrit} for $x_i,y_j\in D\cup\{a,b\}$.
Since~$f$ is a lattice homomorphism,
we can assume that exactly one~$x_i$ and exactly one~$y_j$ belong to~$D$.
Since $a\wedge b=0$, the inequality
$x\wedge a\leq y\vee b$ is equivalent to $x\wedge a\leq y$.
So, \eqref{extcrit} boils down to the equality
$\alpha\wedge\beta=0$ (which is assumed) and the following
implications:
 \begin{align}
 x\leq y\vee a&\Rightarrow f(x)\leq f(y)\vee\ga\,;
 \label{Eq:xleqy+a}\\
 x\leq y\vee b&\Rightarrow f(x)\leq f(y)\vee\gb\,;
 \label{Eq:xleqy+b}\\
 x\leq y\vee a\vee b&\Rightarrow f(x)\leq
 f(y)\vee\ga\vee\gb\,;\label{Eq:xleqy+ab}\\
 x\wedge a\leq y&\Rightarrow f(x)\wedge\ga\leq f(y)\,;
 \label{Eq:xaleqy}\\
 x\wedge b\leq y&\Rightarrow f(x)\wedge\gb\leq f(y)\,.
 \label{Eq:xbleqy}
 \end{align}
The implications~\eqref{Eq:xleqy+a} and~\eqref{Eq:xleqy+b} follow from $\BR(f)\pI{a_B}\leq\ga$ and $\BR(f)\pI{b_B}\leq\nobreak\gb$.
Owing to Condition~\eqref{aveebB}, the implication~\eqref{Eq:xleqy+ab} follows from the inequalities
 \[
 \BR(f)\pI{(a\vee b)_B}=\BR(f)\pI{a_B\vee b_B}=
 \BR(f)(a_B)\vee\BR(f)(b_B)\leq\ga\vee\gb\,.
 \]
Suppose that $x\wedge a\leq y$.
Since~$D$ is a semi-Heyting sublattice of~$E$, it follows that $a\leq x\rightarrow_Ey=x\rightarrow_Dy$, thus, using~\eqref{Eq:c^BDD'}, $a^D=a^B\leq x\rightarrow_Dy$.
It follows that $\ga\leq f(a^B)\leq f(x\rightarrow_Dy)$, thus $f(x)\wedge\ga\leq f(x)\wedge f(x\rightarrow_Dy)\leq f(y)$.
The implication~\eqref{Eq:xaleqy} follows.
The proof of~\eqref{Eq:xbleqy} is similar.

For the remainder of the proof, let~$D'$ be a finite semi-Heyting sublattice of~$D$ such that $\set{a_B,b_B}\subseteq\BR(D')$ and $\set{a^B,b^B}\subseteq D'$ (cf. Figure~\ref{Fig:MainExt}), and let~$\vec{e}$ be a consonance kernel of $f'\eqdef f\res_{D'}$.
Set $(\ga,\gb)\eqdef(f'_{\vec{e}}(a),f'_{\vec{e}}(b))$.

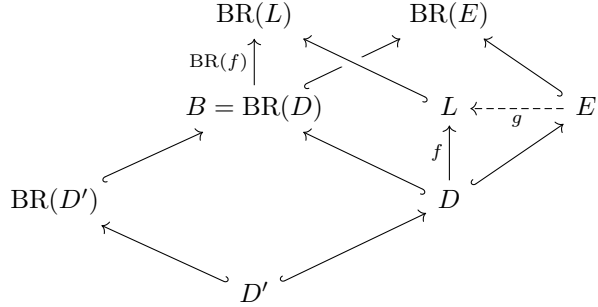
\begin{figure}[htb]
\begin{tikzcd}
\centering
& \BR(L) & \BR(E) &\\
& B=\BR(D)\arrow[u,"\BR(f)"]\arrow[ur,hook'] &
L\arrow[ul,hook,crossing over] & E\arrow[ul,hook]\arrow[l,"g",dashed]\\
\ \BR(D')\arrow[ur,hook'] &&
D\arrow[ul,hook]\arrow[u,"f"]\arrow[ur,hook] &\\
& D'\arrow[ul,hook']\arrow[ur,hook] &&
\end{tikzcd}
\caption{Illustrating the proof of Lemma~\ref{L:MainExt}}
\label{Fig:MainExt}
\end{figure}

For every $p\in\Ji{D'}$, $p\leq p_*\vee a$ (within~$E$) implies that $p\leq p_*\vee a^B$ (within~$D'$), thus, since $p\in\Ji{D'}$, we get $p\leq a^B$, whence $e_p\leq f(p)\leq f(a^B)$.
This proves that $\ga\leq f(a^B)$.
Similarly, $\gb\leq f(b^B)$.
Further, the equation $\ga\wedge\gb=0$ follows from Lemma~\ref{L:pqincomp}.

Let $c\in\set{a,b}$ and let $x,y\in D$ such that $x\leq y\vee c$, we need to prove that $f(x)\leq f(y)\vee f'_{\vec{e}}(c)$.
{F}rom $x\wedge\neg y\leq c$ (within~$\BR(E)$) it follows that $x\wedge\neg y\leq c_B$ (within~$\BR(D)$).
Set $X\eqdef\setm{p\in\Ji{D'}}{p\wedge\neg p_*\leq c_B}=\setm{p\in\Ji{D'}}{p\leq p_*\vee c}$.
By~\eqref{abcBD} and since~$\BR(D')$ is a finite Boolean algebra with atoms~$p\wedge\neg p_*$ for $p\in\Ji{D'}$,
 \begin{equation}\label{Eq:Decc_B}
 c_B=\bigvee\setm{p\wedge\neg p_*}{p\in X}\quad
 \text{within }B\,.
 \end{equation}
By the definition of~$X$,
 \[
 f(p)=f(p_*)\vee e_p\leq f(p_*)\vee f'_{\vec{e}}(c)\quad
 \text{whenever }p\in X\,,
 \]
so $f(p)\wedge\neg f(p_*)\leq f'_{\vec{e}}(c)$ within~$\BR(L)$, whenever $p\in X$; whence, using~\eqref{Eq:Decc_B},
 \begin{equation}\label{Eq:BRfcBRD}
 \BR(f)(c_B)=\bigvee\setm{f(p)\wedge\neg f(p_*)}{p\in X}
 \leq f'_{\vec{e}}(c)
 \quad\text{within }\BR(L)\,.
 \end{equation}
Using~\eqref{Eq:BRfcBRD}, we get
 \[
 f(x)\wedge\neg f(y)=\BR(f)(x\wedge\neg y)\leq\BR(f)(c_B)\leq
 f'_{\vec{e}}(c)
 \,,
 \]
so $f(x)\leq f(y)\vee f'_{\vec{e}}(c)$.
\end{proof}

\section{Adjunctions between lattices~$\Bool(\cF,\kkp{I})$}\label{S:ExtOp}

Throughout this section~$\kk$ will be a totally ordered division ring.
In this section we shall state a few properties of Boolean algebras of the form $\Bool(\cF,\gO)$, mostly related to relative completeness between such algebras.

The following observation is contained in the proof of Wehrung \cite[Lemma~6.6]{MV1}.

\begin{lemma}\label{L:Ca+b}
Let~$\gO$ be a convex subset in a right vector space~$\EE$ over~$\kk$ and let~$\cF\cup\nobreak\set{a}$ be a set of affine functionals on~$\EE$.
Set $A^+\eqdef\bck{a>0}$ and $A^-\eqdef\bck{a<0}$.
Then for every $U\in\Bool(\cF,\gO)$, if $U\subseteq A^+\cup A^-$, then there are $U^+,U^-\in\Bool(\cF,\gO)$ such that $U=U^+\cup U^-$ whereas $U^+\subseteq A^+$ and $U^-\subseteq A^-$.
\end{lemma}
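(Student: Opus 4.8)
The plan is to reduce $U$ to a finite disjoint union of \emph{convex} cells and then to note that each such cell must lie entirely inside $A^+$ or inside $A^-$, because $A^+$ and $A^-$ are separated by the affine hyperplane $\bck{a=0}$.

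First, fix a finite $\cF_0\subseteq\cF$ with $U\in\Bool(\cF_0,\gO)$; this is possible since $U$ involves only finitely many generators. For each $f\in\cF_0$ the set $\bck{f=0}=\gO\sd(\bck{f>0}\cup\bck{f<0})$ again belongs to $\Bool(\cF_0,\gO)$, so $\gO$ is partitioned into the three blocks $\bck{f>0}$, $\bck{f=0}$, $\bck{f<0}$; hence the atoms of the finite Boolean algebra $\Bool(\cF_0,\gO)$ are exactly the nonempty sets $C=\bigcap_{f\in\cF_0}\bck{f\,\sigma_f\,0}$ with $\sigma_f\in\set{<,=,>}$, and $U$ is the disjoint union of the finitely many atoms $C_1,\dots,C_k$ that it contains.

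Next, since $\gO$ is convex and every $f\in\cF_0\cup\set{a}$ is affine, each of the sets $\bck{f>0}$, $\bck{f=0}$, $\bck{f<0}$ (for $f\in\cF_0$), $A^+$, and $A^-$ is convex in $\EE$ — it is the intersection of the convex set $\gO$ with the preimage, under an affine map, of a convex subset of $\kk$ — and therefore so is each atom $C_i$, being a finite intersection of such sets. The key claim is that each $C_i$ is contained in $A^+$ or in $A^-$. Indeed, if some $C_i$ met both, pick $x\in C_i\cap A^+$ and $y\in C_i\cap A^-$; by convexity $x\lambda+y(1-\lambda)\in C_i$ for all $\lambda$ with $0\leq\lambda\leq1$, and since $a$ is affine its value there equals $a(x)\lambda+a(y)(1-\lambda)$, which passes from the positive value $a(x)$ (at $\lambda=1$) to the negative value $a(y)$ (at $\lambda=0$). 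Using that $\kk$ is a division ring one solves $a(x)\lambda+a(y)(1-\lambda)=0$ for some $\lambda$ with $0<\lambda<1$, producing a point of $C_i$ lying in $\bck{a=0}$; but $C_i\subseteq U\subseteq A^+\cup A^-=\gO\sd\bck{a=0}$, a contradiction.

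Finally, set $U^+\eqdef\bigcup\setm{C_i}{C_i\subseteq A^+}$ and $U^-\eqdef\bigcup\setm{C_i}{C_i\subseteq A^-}$. Each is a finite union of atoms of $\Bool(\cF_0,\gO)$, so $U^+,U^-\in\Bool(\cF_0,\gO)\subseteq\Bool(\cF,\gO)$, and by the key claim $U=U^+\cup U^-$ with $U^+\subseteq A^+$ and $U^-\subseteq A^-$. The only real obstacle, modest as it is, is the key claim of the third paragraph: the elementary separation fact that a convex set disjoint from the affine hyperplane $\bck{a=0}$ cannot meet both open half-spaces $\bck{a>0}$ and $\bck{a<0}$. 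This is exactly the place where the division-ring hypothesis on $\kk$ enters, namely to produce the crossing parameter $\lambda$.
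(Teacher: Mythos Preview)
Your proof is correct and follows essentially the same approach as the paper's: decompose~$U$ into finitely many convex cells coming from~$\Bool(\cF,\gO)$ and use the affine intermediate-value argument (solving $a(x)\gl+a(y)(1-\gl)=0$ in the ordered division ring~$\kk$) to show each cell lies entirely in~$A^+$ or in~$A^-$. The only cosmetic difference is that the paper writes its cells as intersections of sets $\bck{\pm f>0}$ and $\bck{\pm f\geq0}$, while you take the finer atom decomposition with sign patterns in $\set{<,=,>}$; both yield convex pieces and the separation step is identical.
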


\begin{proof}
Since~$U$ is the union of finitely many cells, each of which being the intersection of finitely many sets of the form either $\bck{\pm f>0}$ or $\bck{\pm f\geq0}$ where $f\in\cF$, it suffices to consider the case where~$U$ is such a cell.
If~$U$ meets both~$A^+$ and~$A^-$, pick $x\in U\cap A^+$ and $y\in U\cap A^-$; so $a(x)>0$ and $a(y)<0$.
Then $\gl\eqdef(a(y)-a(x))^{-1}a(y)$ belongs to the open interval~$\oo{0,1}$ and $a(x\gl+y(1-\gl))=0$, that is, $x\gl+y(1-\gl)\notin A^+\cup A^-$.
On the other hand, since~$U$ is convex, $x\gl+y(1-\gl)\in U$; a contradiction since $U\subseteq A^+\cup A^-$.
Therefore, $U$ is disjoint either from~$A^+$ or from~$A^-$, thus it is contained either in~$A^+$ or in~$A^-$.
\end{proof}

\begin{corollary}\label{C:Ca+b}
In the context of Lemma~\textup{\ref{C:Ca+b}}, $(A^+\cup A^-)_{\Bool(\cF,\gO)}$ exists if{f} both $(A^+)_{\Bool(\cF,\gO)}$ and $(A^-)_{\Bool(\cF,\gO)}$  exist, and then
 \[
 (A^+\cup A^-)_{\Bool(\cF,\gO)}=
 (A^+)_{\Bool(\cF,\gO)}\cup(A^-)_{\Bool(\cF,\gO)}\,.
 \]
\end{corollary}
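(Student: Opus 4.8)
Set $\cB\eqdef\Bool(\cF,\gO)$. The plan is to derive the whole statement from Lemma~\ref{L:Ca+b} together with the elementary fact that $A^+$ and $A^-$ are disjoint (since $a(x)>0$ and $a(x)<0$ cannot both hold). First I would record the following remark: if $U\in\cB$ satisfies $U\subseteq A^+\cup A^-$ and $U=U^+\cup U^-$ is a decomposition as provided by Lemma~\ref{L:Ca+b}, with $U^+,U^-\in\cB$, $U^+\subseteq A^+$, and $U^-\subseteq A^-$, then in fact $U^+=U\cap A^+$ and $U^-=U\cap A^-$ --- indeed $U\cap A^+=(U^+\cup U^-)\cap A^+=U^+$, because $U^+\subseteq A^+$ and $U^-\cap A^+\subseteq A^-\cap A^+=\es$. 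In particular, for such a~$U$ both $U\cap A^+$ and $U\cap A^-$ lie in~$\cB$, although $A^+$ and $A^-$ themselves need not.

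For the ``if'' direction I would assume that $B^+\eqdef(A^+)_{\cB}$ and $B^-\eqdef(A^-)_{\cB}$ exist, and check that $B^+\cup B^-$ is the largest element of~$\cB$ contained in $A^+\cup A^-$: it is clearly in~$\cB$ and contained in $A^+\cup A^-$, and for any $U\in\cB$ with $U\subseteq A^+\cup A^-$, decomposing $U=U^+\cup U^-$ as above and using the maximality of $B^+$ and~$B^-$ gives $U^+\subseteq B^+$, $U^-\subseteq B^-$, hence $U\subseteq B^+\cup B^-$. This yields both the existence of $(A^+\cup A^-)_{\cB}$ and the displayed formula.

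For the ``only if'' direction I would assume that $C\eqdef(A^+\cup A^-)_{\cB}$ exists and apply the preliminary remark to $U=C$, obtaining $C\cap A^+,\,C\cap A^-\in\cB$ with $C=(C\cap A^+)\cup(C\cap A^-)$. Then I would verify that $C\cap A^+=(A^+)_{\cB}$: it is an element of~$\cB$ contained in~$A^+$, and any $U\in\cB$ with $U\subseteq A^+$ satisfies $U\subseteq A^+\cup A^-$, hence $U\subseteq C$ by maximality of~$C$, hence $U=U\cap A^+\subseteq C\cap A^+$. Symmetrically $C\cap A^-=(A^-)_{\cB}$, and once again $C=(A^+)_{\cB}\cup(A^-)_{\cB}$.

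I do not expect a genuine obstacle here; the only subtlety --- and the sole reason Lemma~\ref{L:Ca+b} is invoked --- is that $A^+$ and~$A^-$ are \emph{a priori} not members of~$\cB$, so ``intersecting with $A^+$'' is not visibly an operation inside~$\cB$. The disjointness of $A^+$ and $A^-$ is precisely what turns the non-canonical decomposition of Lemma~\ref{L:Ca+b} into the canonical splitting $U=(U\cap A^+)\cup(U\cap A^-)$ on which both directions rest.
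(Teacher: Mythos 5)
Your proof is correct and is exactly the intended deduction from Lemma~\ref{L:Ca+b}: the paper leaves this corollary without proof, and your observation that disjointness of $A^+$ and $A^-$ makes the decomposition canonical ($U^{\pm}=U\cap A^{\pm}$) is precisely the point that makes both directions work. Nothing to add.
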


In what follows we will identify every element $f\in\kkp{I}$ with the associated linear functional on~$\kkp{I}$, that is, $x\mapsto\sum_{i\in I}f_ix_i$.
Moreover, whenever $I\subseteq J$ we will identify~$\kkp{I}$ with the subset of~$\kkp{J}$ consisting of all vectors with support contained in~$I$.

\begin{notation}\label{Not:BoolkI}
For $I\subseteq J$, we define mappings 
 \begin{align*}
 \eps_{I,J}&\colon\Pow\kkp{I}\to\Pow\kkp{J}\,,\\
 \rho_{J,I}^{\vee},\rho_{J,I}^{\wedge}&\colon\Pow\kkp{J}\to\Pow\kkp{I}\,,
 \end{align*}
by
\begin{align*}
\eps_{I,J}(X)&=\setm{y\in\kkp{J}}{y\res_I\in X}\,,\\
 \rho_{J,I}^{\wedge}(Y)&=\setm{x\in\kkp{I}}
 {(\forall y\in\kkp{J})(y\res_I=x\Rightarrow y\in Y)}\,,\\
 \rho_{J,I}^{\vee}(Y)&=\setm{y\res_I}{y\in Y}\,. 
 \end{align*}
\end{notation}

The following statements are immediate consequences of the definitions:
\begin{itemize}
\item 
$\eps_{I,J}$ is an embedding of Boolean algebras, $\rho_{J,I}^{\wedge}$ is a \mh, and $\rho_{J,I}^{\vee}$ is a \jh.
Moreover, $\rho_{J,I}^{\wedge}$ and $\rho_{J,I}^{\vee}$ are right and left adjoint to~$\eps_{I,J}$, respectively.

\item
$\rho_{J,I}^{\wedge}$ and~$\rho_{J,I}^{\vee}$ are conjugate, that is, $\kkp{I}\setminus\rho_{J,I}^{\wedge}(Y)=\rho_{J,I}^{\vee}\pII{\kkp{J}\setminus Y}$ whenever $Y\subseteq\kkp{J}$.
\end{itemize}

\begin{lemma}\label{L:gegrPresBool}
Let~$I$ and~$J$ be sets with $I\subseteq J$.
The following statements hold:
\begin{enumerater}
\item\label{relcomp} 
$\eps_{I,J}\rho_{J,I}^{\wedge}(Z)=\eps_{I,J}\rho_{J,I}^{\wedge}(Z)=Z$ for every $Z\in\Bool(\kkp{I},\kkp{J})$.

\item\label{eps2Bool}
$\eps_{I,J}[\Bool(\kkp{I},\kkp{I})]=\Bool(\kkp{I},\kkp{J})\subseteq\Bool(\kkp{J},\kkp{J})$.

\item\label{rho2Bool}
$\rho_{J,I}^{\wedge}[\Bool(\kkp{J},\kkp{J})]=\rho_{J,I}^{\vee}[\Bool(\kkp{J},\kkp{J})]=\Bool(\kkp{I},\kkp{I})$.

\end{enumerater}
\end{lemma}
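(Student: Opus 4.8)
The plan is to derive \eqref{relcomp} and \eqref{eps2Bool}, together with the inclusions~$\supseteq$ in \eqref{rho2Bool}, as purely formal consequences of the adjunction and conjugacy relations recorded after Notation~\ref{Not:BoolkI}, and then to reduce the remaining inclusions~$\subseteq$ in \eqref{rho2Bool} to a one-variable Fourier--Motzkin elimination in finitely many coordinates. The key elementary observation is that for a linear functional $f\in\kkp{I}$ and $y\in\kkp{J}$ one has $f(y)=f(y\res_I)$, since $f$ vanishes off~$I$; hence $\bck{f>0}_{\kkp{J}}=\eps_{I,J}\pI{\bck{f>0}_{\kkp{I}}}$, and likewise for~$\bck{f<0}$. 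As $\eps_{I,J}$ is a Boolean embedding, it therefore carries the generating set of $\Bool(\kkp{I},\kkp{I})$ onto the generating set of $\Bool(\kkp{I},\kkp{J})$, and a routine argument with images and preimages of Boolean subalgebras gives the equality in \eqref{eps2Bool}; the inclusion $\Bool(\kkp{I},\kkp{J})\subseteq\Bool(\kkp{J},\kkp{J})$ is immediate from $\kkp{I}\subseteq\kkp{J}$.

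Next I would record that, because every $x\in\kkp{I}$ extends by zero to some $y\in\kkp{J}$ with $y\res_I=x$, unwinding the definitions yields $\rho_{J,I}^{\wedge}\circ\eps_{I,J}=\rho_{J,I}^{\vee}\circ\eps_{I,J}=\id_{\Pow(\kkp{I})}$; alternatively this just expresses that the embedding~$\eps_{I,J}$ is ``fully faithful'' for each of its two adjoints. Combined with \eqref{eps2Bool}, this gives \eqref{relcomp} at once: any $Z\in\Bool(\kkp{I},\kkp{J})$ has the form $\eps_{I,J}(X)$ with $X\in\Bool(\kkp{I},\kkp{I})$, so $\eps_{I,J}\rho_{J,I}^{\wedge}(Z)=\eps_{I,J}\rho_{J,I}^{\vee}(Z)=\eps_{I,J}(X)=Z$. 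It also gives the inclusions~$\supseteq$ in \eqref{rho2Bool}, since for $X\in\Bool(\kkp{I},\kkp{I})$ the set $\eps_{I,J}(X)$ lies in $\Bool(\kkp{J},\kkp{J})$ by \eqref{eps2Bool}, with $\rho_{J,I}^{\wedge}\eps_{I,J}(X)=\rho_{J,I}^{\vee}\eps_{I,J}(X)=X$.

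For the inclusions~$\subseteq$ in \eqref{rho2Bool}, I would first use the conjugacy $\kkp{I}\sd\rho_{J,I}^{\wedge}(Y)=\rho_{J,I}^{\vee}\pII{\kkp{J}\sd Y}$ and closure of $\Bool(\kkp{I},\kkp{I})$ under complementation to reduce to the single claim $\rho_{J,I}^{\vee}[\Bool(\kkp{J},\kkp{J})]\subseteq\Bool(\kkp{I},\kkp{I})$. Given $W\in\Bool(\kkp{J},\kkp{J})$, only finitely many functionals $f_1,\dots,f_n\in\kkp{J}$ occur in a defining Boolean expression for~$W$; putting $S\eqdef\bigcup_m\supp f_m$, a finite subset of~$J$, we get $W\in\Bool(\kkp{S},\kkp{J})$, so by \eqref{eps2Bool} there is $V\in\Bool(\kkp{S},\kkp{S})$ with $W=\eps_{S,J}(V)$. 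A short computation, again using surjectivity of restriction maps, will show $\rho_{J,I}^{\vee}\pI{\eps_{S,J}(V)}=\eps_{S\cap I,\,I}\pI{\rho_{S,\,S\cap I}^{\vee}(V)}$; since $S\cap I$ is a \emph{finite} subset of~$I$, \eqref{eps2Bool} then reduces the problem to showing $\rho_{S,\,S\cap I}^{\vee}(V)\in\Bool(\kkp{S\cap I},\kkp{S\cap I})$, that is, to the case where both index sets are finite. There, $\rho^{\vee}$ commutes with finite unions, $V$ is a finite union of cells (finite intersections of half-spaces $\bck{g>0}$ and $\bck{g\geq0}$), and $S\cap I$ is reached from~$S$ by deleting one coordinate at a time; so the matter comes down to the statement that the image of a cell of~$\kkp{S}$ under deletion of a single coordinate is a finite union of cells of the smaller space. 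This is classical Fourier--Motzkin elimination.

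The one point requiring care is that~$\kk$ is only a (possibly non-commutative) totally ordered division ring rather than~$\RR$: Fourier--Motzkin elimination isolates the chosen variable in each inequality and compares the resulting lower and upper bounds, and for this one needs only that left and right multiplication by a positive scalar preserves the ordering, which holds in every totally ordered ring; the bookkeeping of strict versus non-strict inequalities, and of the degenerate cases where the variable has no lower (or no upper) bound, is routine. I expect keeping the left/right conventions straight during the elimination to be the only genuinely delicate part of the argument, everything else being formal.
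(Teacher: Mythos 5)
Your proposal is correct. Parts \eqref{relcomp}, \eqref{eps2Bool}, and the containments $\supseteq$ in \eqref{rho2Bool} are handled exactly as in the paper (which dismisses \eqref{relcomp} and \eqref{eps2Bool} as trivial and proves $\supseteq$ via $X=\rho_{J,I}^{\vee}\eps_{I,J}(X)$), and you use the same conjugacy reduction from $\rho_{J,I}^{\wedge}$ to $\rho_{J,I}^{\vee}$. Where you genuinely diverge is the hard containment $\rho_{J,I}^{\vee}[\Bool(\kkp{J},\kkp{J})]\subseteq\Bool(\kkp{I},\kkp{I})$. The paper stays with infinitely many coordinates: it reduces to a single cell $Y=\bigcap_{i<m}\bck{a_i\geq0}\cap\bigcap_{j<n}\bck{b_j>0}$, splits each functional as $a_i=a'_i+a''_i$ along $I$ and $J\setminus I$, observes that the achievable tuples $\pI{a''_i(z),b''_j(z)}$ form a vector subspace $V$ of $\kk^{m+n}$ cut out by finitely many linear equations, and then invokes quantifier elimination for totally ordered $\kk$-vector spaces (van den Dries, Corollary~I.7.8) to convert the condition ``$\exists u\in V$ with $a'_i(x)+u_i\geq0$ and $b'_j(x)+u_{m+j}>0$'' into a quantifier-free condition on finitely many linear combinations of the $a'_i$, $b'_j$. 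You instead first pass to a finite support set $S$ via the (correct) commutation identity $\rho_{J,I}^{\vee}\eps_{S,J}(V)=\eps_{S\cap I,I}\,\rho_{S,S\cap I}^{\vee}(V)$, and then eliminate one coordinate at a time by explicit Fourier--Motzkin. The two arguments are close in spirit --- the cited quantifier-elimination theorem is itself established by a Fourier--Motzkin-style elimination --- but yours is self-contained and avoids the model-theoretic black box, at the price of carrying out the elimination by hand over a possibly noncommutative $\kk$ (which, as you correctly note, only requires that multiplication by a positive element such as $a_j^{-1}$ preserves the order, so the isolated bounds are again given by functionals supported on the remaining coordinates). The paper's route is shorter and skips the finite-support reduction entirely. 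Both are valid.
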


\begin{proof}
\emph{Ad}~\eqref{relcomp} and \eqref{eps2Bool} are both trivial.
In order to prove~\eqref{rho2Bool}, it suffices, since~$\rho_{J,I}^{\wedge}$ and~$\rho_{J,I}^{\vee}$ are conjugate, to establish the result for~$\rho_{J,I}^{\vee}$.
For every $X\in\Bool(\kkp{I},\kkp{I})$, $X=\rho_{J,I}^{\vee}\eps_{I,J}(X)$ with $\eps_{I,J}(X)\in\Bool(\kkp{J},\kkp{J})$, thus $X\in\rho_{J,I}^{\vee}[\Bool(\kkp{J},\kkp{J})]$; whence
$\rho_{J,I}^{\vee}[\Bool(\kkp{J},\kkp{J})]$ contains~$\Bool(\kkp{I},\kkp{I})$.

Let us establish the converse containment.
Since~$\rho_{J,I}^{\vee}$ is a \jzh, it suffices to prove that $\rho_{J,I}^{\vee}(Y)\in\Bool(\kkp{I},\kkp{I})$ whenever~$Y$ is a set of the form $\bigcap_{i<m}\bck{a_i\geq0}\cap\bigcap_{j<n}\bck{b_j>0}$ where $m,n<\go$ and all $a_i,b_j\in\kkp{J}$.

Set $a'_i\eqdef a_i\res_I$ and $a''_i\eqdef a_i\res_{J\setminus I}$, for all $i<m$, and define similarly~$b'_j$ and~$b''_j$ for $j<n$.
An element $x\in\kkp{I}$ belongs to $\rho_{J,I}^{\vee}(Y)$ if{f} there exists $z\in\kkp{J\setminus I}$ such that each $a'_i(x)+a''_i(z)\geq0$ and each $b'_j(x)+b''_j(z)>0$.
The set~$V$ of all $(m+n)$-tuples of elements of~$\kk$ of the form $(a''_0(z),\dots,a''_{m-1}(z),b''_0(z),\dots,b''_{n-1}(z))$ is a vector subspace of~$\kk^{m+n}$.
Hence, an element $x\in\kkp{I}$ belongs to $\rho_{J,I}^{\vee}(Y)$ if{f} there exists $u\in V$ such that $a'_i(x)+u_i\geq0$ whenever $i<m$ and $b'_j(x)+u_{m+j}>0$ whenever $j<n$.
Since membership in~$V$, of any $(m+n)$-tuple of elements of~$\kk$, can be expressed by a finite set of linear equations, the statement that a given $x\in\nobreak\kkp{I}$ belongs to $\rho_{J,I}^{\vee}(Y)$ can be expressed by a sentence, over the first-order language $\cL\eqdef\set{<,0,-,+}\cup\setm{\cdot\gl}{\gl\in\kk}$ of ordered Abelian groups augmented with right scalar multiplications by elements of~$\kk$, in $(a'_0(x),\dots,a'_{m-1}(x),b'_0(x),\dots,b'_{n-1}(x))$.
Now every $\cL$-sentence is equivalent, over all nonzero totally ordered right $\kk$-vector spaces, to a quantifier-free $\cL$-sentence (cf. van den Dries \cite[Corollary~I.7.8]{vdD1998}).
Therefore, $\rho_{J,I}^{\vee}(Y)$ belongs to $\Bool(\cF,\kkp{I})$ for a finite set~$\cF$ of linear combinations of the~$a'_i$ and the~$b'_j$.
\end{proof}

\begin{proposition}\label{P:gegrPresBool}
Let~$I$ and~$J$ be sets with $I\subseteq J$ and let~$\cD$ be a finite subset of~$\kkp{J}$.
Then $\Bool(\kkp{I}\cup\cD,\kkp{J})$ is relatively complete in~$\Bool(\kkp{J},\kkp{J})$.
\end{proposition}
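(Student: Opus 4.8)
The plan is to reduce the statement to Lemma~\ref{L:RelCplBA} (relative completeness is preserved under finite generation) together with Lemma~\ref{L:gegrPresBool}. Write $\cD=\set{d_0,\dots,d_{n-1}}$. Since $\Bool(\kkp{J},\kkp{J})$ is a Boolean algebra and $\Bool(\kkp{I}\cup\cD,\kkp{J})$ is the subalgebra of $\Bool(\kkp{J},\kkp{J})$ generated by $\Bool(\kkp{I},\kkp{J})$ together with the finitely many sets $\bck{d_k>0}$ and $\bck{d_k<0}$ for $k<n$, the algebra $\Bool(\kkp{I}\cup\cD,\kkp{J})$ is finitely generated \emph{over}~$\Bool(\kkp{I},\kkp{J})$. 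By Lemma~\ref{L:RelCplBA}, it therefore suffices to prove that $\Bool(\kkp{I},\kkp{J})$ is relatively complete in $\Bool(\kkp{J},\kkp{J})$.

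For this, recall from Lemma~\ref{L:gegrPresBool}\eqref{eps2Bool} that $\Bool(\kkp{I},\kkp{J})=\eps_{I,J}[\Bool(\kkp{I},\kkp{I})]$, so we must show that for each $Y\in\Bool(\kkp{J},\kkp{J})$ the element $Y^{\Bool(\kkp{I},\kkp{J})}$ exists (it then follows, by the remark after Notation~\ref{Not:fv(a)} applied to the Boolean subalgebra, equivalently since in a Boolean algebra relative completeness only needs the ``upper'' half, that $Y_{\Bool(\kkp{I},\kkp{J})}$ exists too). The candidate is $\eps_{I,J}\rho_{J,I}^{\wedge}(Y)$. First, $\rho_{J,I}^{\wedge}(Y)\in\Bool(\kkp{I},\kkp{I})$ by Lemma~\ref{L:gegrPresBool}\eqref{rho2Bool}, so $\eps_{I,J}\rho_{J,I}^{\wedge}(Y)$ does lie in $\Bool(\kkp{I},\kkp{J})$. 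Second, since $\rho_{J,I}^{\wedge}$ is right adjoint to~$\eps_{I,J}$, for every $Z\in\Bool(\kkp{I},\kkp{I})$ we have $\eps_{I,J}(Z)\supseteq Y$ if and only if $Z\supseteq\rho_{J,I}^{\wedge}(Y)$, i.e. if and only if $\eps_{I,J}(Z)\supseteq\eps_{I,J}\rho_{J,I}^{\wedge}(Y)$ (using that $\eps_{I,J}$ is an order-embedding). As every element of $\Bool(\kkp{I},\kkp{J})$ above~$Y$ has the form $\eps_{I,J}(Z)$ with $Z\in\Bool(\kkp{I},\kkp{I})$, this says exactly that $\eps_{I,J}\rho_{J,I}^{\wedge}(Y)$ is the least element of $\Bool(\kkp{I},\kkp{J})$ containing~$Y$; that is, $Y^{\Bool(\kkp{I},\kkp{J})}=\eps_{I,J}\rho_{J,I}^{\wedge}(Y)$ exists. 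Since the subalgebra in question is closed under complementation, $Y_{\Bool(\kkp{I},\kkp{J})}=\neg\bigl((\neg Y)^{\Bool(\kkp{I},\kkp{J})}\bigr)$ exists as well. Hence $\Bool(\kkp{I},\kkp{J})$ is relatively complete in $\Bool(\kkp{J},\kkp{J})$, and the proposition follows.

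The only genuinely non-trivial input is Lemma~\ref{L:gegrPresBool}\eqref{rho2Bool}, namely that the join-preserving projection $\rho_{J,I}^{\vee}$ (equivalently, via conjugacy, the meet-preserving $\rho_{J,I}^{\wedge}$) carries $\Bool(\kkp{J},\kkp{J})$ into $\Bool(\kkp{I},\kkp{I})$ — this is where the model-theoretic quantifier-elimination over totally ordered $\kk$-vector spaces is used, and it has already been established. Everything else here is formal manipulation of adjunctions plus the finite-generation stability of Lemma~\ref{L:RelCplBA}, so I do not anticipate any obstacle in the write-up; the main point to be careful about is simply to phrase the minimality of $\eps_{I,J}\rho_{J,I}^{\wedge}(Y)$ correctly via the adjunction and the fact that $\eps_{I,J}$ is an embedding.
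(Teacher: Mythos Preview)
Your overall strategy is exactly the paper's: first show $\Bool(\kkp{I},\kkp{J})$ is relatively complete in $\Bool(\kkp{J},\kkp{J})$ via the adjunctions from Lemma~\ref{L:gegrPresBool}, then invoke Lemma~\ref{L:RelCplBA} for the finite extension by the generators $\bck{d>0}$, $\bck{d<0}$.

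There is, however, a genuine slip in the adjunction step. You take $\eps_{I,J}\rho_{J,I}^{\wedge}(Y)$ as the candidate for $Y^{\Bool(\kkp{I},\kkp{J})}$ and justify it by ``since $\rho_{J,I}^{\wedge}$ is right adjoint to~$\eps_{I,J}$, $\eps_{I,J}(Z)\supseteq Y$ iff $Z\supseteq\rho_{J,I}^{\wedge}(Y)$''. That equivalence is false: right adjointness gives $\eps_{I,J}(Z)\subseteq Y\Leftrightarrow Z\subseteq\rho_{J,I}^{\wedge}(Y)$, the opposite inequalities. Concretely, $\eps_{I,J}\rho_{J,I}^{\wedge}(Y)\subseteq Y$ always, so it is the \emph{lower} approximation~$Y_{\Bool(\kkp{I},\kkp{J})}$, not the upper one. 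The least element of $\Bool(\kkp{I},\kkp{J})$ above~$Y$ is $\eps_{I,J}\rho_{J,I}^{\vee}(Y)$, obtained from the \emph{left} adjoint: $Y\subseteq\eps_{I,J}(Z)\Leftrightarrow\rho_{J,I}^{\vee}(Y)\subseteq Z$. This is precisely how the paper argues (and then gets $Y_{\Bool(\kkp{I},\kkp{J})}=\eps_{I,J}\rho_{J,I}^{\wedge}(Y)$ symmetrically). Swap $\rho^{\wedge}$ for $\rho^{\vee}$ in your upper-approximation paragraph and the argument is correct; your complementation remark for the lower approximation is then redundant but harmless.

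A minor cross-reference issue: the observation that for a Boolean subalgebra it suffices to produce~$x^P$ is the remark following the definition of relative completeness in Section~\ref{S:Basic}, not anything near Notation~\ref{Not:fv(a)}.
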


\begin{proof}
We first prove that $\Bool(\kkp{I},\kkp{J})$ is relatively complete in 
$\Bool(\kkp{J},\kkp{J})$.
Let $Y\in\Bool(\kkp{J},\kkp{J})$.
By Lemma~\ref{L:gegrPresBool}, 
$\eps_{I,J}\rho^{\vee}_{J,I}(Y), \eps_{I,J}\rho^{\wedge}_{J,I}(Y)\in\Bool(\kkp{I},\kkp{J})$. Further,
$Y\subseteq Z\in \Bool(\kkp{I},\kkp{J})$ implies $\eps_{I,J}\rho^{\vee}_{J,I}(Y)\subseteq \eps_{I,J}\rho^{\vee}_{J,I}(Z)=Z$.
Thus, $Y^{\Bool(\kkp{I},\kkp{J})}=\eps_{I,J}\rho^{\vee}_{J,I}(Y)$ and similarly, $Y_{\Bool(\kkp{I},\kkp{J})}=\eps_{I,J}\rho^{\wedge}_{J,I}(Y)$.

Since $\Bool(\kkp{I}\cup\cD,\kkp{J})$ is finitely generated over $\Bool(\kkp{I},\kkp{J})$ (via the additional generators~$\bck{d>0}$ and~$\bck{d<0}$ for $d\in\cD$),
the desired conclusion follows from Lemma~\ref{L:RelCplBA}.
\end{proof}

\section{Extending a top-faithful map}\label{S:ExtTF}

In Lemmas~\ref{L:DomStep} and~\ref{L:RngStep} we fix a totally ordered division ring~$\kk$.
The following lemma takes care of the ``domain step'' required in the proof of Theorem~\ref{T:Al1hom}.

\begin{lemma}\label{L:DomStep}
Let~$I$ and~$J$ be sets, let~$L$ be a completely normal distributive $0$-lattice, let~$\cD$ be a finite subset of~$\kkp{J}$, and let $e\in\kkp{J}$.
Then every top-faithful $0$-lattice homomorphism $f\colon\Op(\kkp{I}\cup\cD,\kkp{J})\to\ip{L}$ extends to a top-faithful lattice homomorphism $g\colon\Op(\kkp{I}\cup\cD\cup\set{e},\kkp{J})\to\ip{L}$ \pup{cf. Figure~\textup{\ref{Fig:DomStep}}}.
\end{lemma}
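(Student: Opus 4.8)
The plan is to bring the situation into the range of the Main Extension Lemma (Lemma~\ref{L:MainExt}) and then to promote the extension it delivers into a \emph{top-faithful} one. Set $\gO\eqdef\kkp{J}$, $D\eqdef\Op(\kkp{I}\cup\cD,\gO)$, $E\eqdef\Op(\kkp{I}\cup\cD\cup\set{e},\gO)$, $a\eqdef\bck{e>0}$, $b\eqdef\bck{e<0}$, and $B\eqdef\BR(D)=\Bool(\kkp{I}\cup\cD,\gO)$; then $\BR(E)=\Bool(\kkp{I}\cup\cD\cup\set{e},\gO)$, and $B\subseteq\BR(E)\subseteq\Bool(\gO,\gO)$. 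The new generators of $E$ over $D$ are exactly $a$ and $b$, while $a\wedge b=\bck{e>0}\cap\bck{e<0}=\es$, so $E$ is generated as a lattice by $D\cup\set{a,b}$ with $a,b$ disjoint. I would apply Lemma~\ref{L:MainExt} to $D\subseteq E$ with this $a,b$, and take ``$L$'' of that lemma to be $\ip{L}$.

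First I would verify the hypotheses of Lemma~\ref{L:MainExt}. Conditions~\eqref{E=Dab} and~\eqref{ammb=0} are the two sentences just stated. For~\eqref{abcBD}, Proposition~\ref{P:gegrPresBool} (used with $\cD$, and again with $\cD\cup\set{e}$) makes $B$ and $\BR(E)$ relatively complete in $\Bool(\gO,\gO)$, hence $B$ is relatively complete in $\BR(E)$, so that $a_B,b_B,(a\vee b)_B,a^B,b^B$ all exist; then~\eqref{aveebB} follows from Corollary~\ref{C:Ca+b}, applied to the family $\kkp{I}\cup\cD$ with the extra functional $e$ (so $A^+=a$, $A^-=b$): as $a_B$ and $b_B$ exist, $(a\vee b)_B=a_B\vee b_B$. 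The only structural point is that $D$ is a semi-Heyting sublattice of $E$; more generally, $\Op(\cF,\gO)$ is a semi-Heyting sublattice of $\Op(\cF',\gO)$ whenever $\cF\subseteq\cF'$, because for $x,y\in\Op(\cF,\gO)$ the element $\neg x\vee y$ lies in $\Bool(\cF,\gO)$ and has, as largest lower bound inside $\Op(\cF,\gO)$ as well as inside $\Op(\cF',\gO)$, its topological interior in $\gO$ --- itself a finite union of open cells built from $\cF$, since the interior in $\gO$ of a Boolean combination of $\kk$-half-spaces is again such a union (the relative completeness of the $\Op$-lattices inside the $\Bool$-lattices; cf.\ Wehrung~\cite{MV1,RAlg}). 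So Lemma~\ref{L:MainExt} applies; in particular $a^B,b^B\in D$, and conditions (i) and~(ii) of that lemma are equivalent.

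Next I would exhibit a pair satisfying~(ii). Each of $a_B,b_B,a^B,b^B$ involves only finitely many functionals from $\kkp{I}\cup\cD$, so I can fix a finite $\cH\subseteq\kkp{I}\cup\cD$ with $a_B,b_B\in\Bool(\cH,\gO)$ and $a^B,b^B\in\Op(\cH,\gO)$, and put $D'\eqdef\Op(\cH,\gO)$ --- a \emph{finite} $0,1$-sublattice of $D$, semi-Heyting in $D$ by the previous paragraph, with $\set{a_B,b_B}\subseteq\BR(D')=\Bool(\cH,\gO)$ and $\set{a^B,b^B}\subseteq D'$. As $L$ is completely normal, so is $\ip{L}$ (adjoin a top element), hence the finite set $f[D']\subseteq\ip{L}$ is consonant, and Lemma~\ref{L:ConsKer} produces a consonance kernel $\vec{e}$ for $f'\eqdef f\res_{D'}$. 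By the last clause of Lemma~\ref{L:MainExt}, $(\ga,\gb)\eqdef(f'_{\vec{e}}(a),f'_{\vec{e}}(b))$ satisfies~(ii), so by (i) there is a lattice homomorphism $g\colon E\to\ip{L}$ extending $f$ with $g(a)=\ga$ and $g(b)=\gb$.

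Finally --- and this is the step I expect to be the main obstacle, since Lemma~\ref{L:MainExt} says nothing about top-faithfulness --- I would check that $g$ is top-faithful. The reduction is: $E\setminus\set{\gO}=\Ops(\kkp{I}\cup\cD\cup\set{e},\gO)$ is generated, as a lattice, by the half-spaces $\bck{h>0}$ (for $h\in\pm(\kkp{I}\cup\cD)$) together with $a$ and $b$; each $\bck{h>0}$ is a proper subset of $\gO$ (since $h(0)=0$), whence $f(\bck{h>0})\in L$ because $f$ is top-faithful; so if $\ga,\gb\in L$ then $g$ maps every element of $E\setminus\set{\gO}$ into the $0$-sublattice $L$ of $\ip{L}$, and therefore $g^{-1}\set{\infty}=\set{\gO}$. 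It thus remains to see $\ga\neq\infty\neq\gb$. Now $\ga=\bigvee\setm{e_p}{p\in\Ji{D'},\ p\leq p_*\vee a}$ is a finite join in $\ip{L}$, so $\ga=\infty$ would force $e_p=\infty$ for some $p$ in that index set; but no such $p$ can be the top $\gO$ of $D'$ --- if $p=\gO$, then $p_*$, being a finite union of open cells $\bck{h>0}$ with $h(0)=0$, misses the origin, and so does $a=\bck{e>0}$, whence $p_*\vee a\neq\gO=p$, contradicting $p\leq p_*\vee a$. So $f(p)\neq\infty$ by top-faithfulness of $f$, and since $e_p\leq f(p)$ (inspect $e_p=f(p)\sd_Kf(p_*)$ from the proof of Lemma~\ref{L:ConsKer}) we get $e_p\in L$; hence $\ga\in L$, and symmetrically $\gb\in L$. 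This completes the argument. The crux, as indicated, lies in realizing that top-faithfulness reduces to keeping $\ga$ and $\gb$ out of $\set{\infty}$, and that the explicit consonance-kernel formula for $f'_{\vec{e}}$ never involves the join-irreducible element $\gO$, precisely because $p_*\vee\bck{e>0}$ cannot cover $\gO$.
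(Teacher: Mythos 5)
Your proof is correct and follows essentially the same route as the paper's: relative completeness via Proposition~\ref{P:gegrPresBool} and Corollary~\ref{C:Ca+b} to meet the hypotheses of Lemma~\ref{L:MainExt}, a consonance kernel on a finite Heyting (semi-Heyting) sublattice $D'$ via Lemma~\ref{L:ConsKer}, and then the extension $(f'_{\vec{e}}(a),f'_{\vec{e}}(b))$. Your top-faithfulness argument --- that no join-irreducible $p$ of $D'$ with $p\leq p_*\vee\bck{e>0}$ can be the top element, because $p_*$ and $\bck{e>0}$ both miss the origin --- is exactly the paper's concluding step, so only your inline justification of the semi-Heyting property (which the paper delegates to \cite[Lemma~5.4]{MV1}) differs in presentation.
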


\begin{figure}[htb]
\begin{tikzcd}
\centering
\ip{L} & \\
\Op(\kkp{I}\cup\cD,\kkp{J})\arrow[u,"f"]\arrow[r,"\subseteq",hook] &
\Op(\kkp{I}\cup\cD\cup\set{e},\kkp{J}))\arrow[ul,"g"',dashed]
\end{tikzcd}
\caption{A commutative triangle for Lemma~\ref{L:DomStep}}
\label{Fig:DomStep}
\end{figure}

\begin{proof}
Set $\cE\eqdef\cD\cup\set{e}$, $D\eqdef\Op(\kkp{I}\cup\cD,\kkp{J})$, $E\eqdef\Op(\kkp{I}\cup\cE,\kkp{J})$, $B\eqdef\BR(D)=\Bool(\kkp{I}\cup\cD,\kkp{J})$, and $C\eqdef\Bool(\kkp{I}\cup\cE,\kkp{J})$.
By Proposition~\ref{P:gegrPresBool}, $B$ is relatively complete in~$C$.
In particular, setting $a\eqdef\bck{e>0}$ and $b\eqdef\bck{e<0}$, the elements~$a^B$, $b^B$, $a_B$, $b_B$, and $(a\vee b)_B$ are all defined.
By Corollary~\ref{C:Ca+b}, $(a\vee b)_B=a_B\vee b_B$.
Let~$\cD'$ be a finite subset of $\kkp{I}\cup\cD$ such that~$a^B$, $b^B$, $a_B$, and~$b_B$ all belong to $B'\eqdef\Bool(\cD',\kkp{J})$.
By Wehrung \cite[Lemma~5.4]{MV1} (see also Wehrung \cite[Lemma~4.1]{RAlg} for the more general form of that statement), $D$ is a Heyting subalgebra of~$E$ and 
$D'\eqdef\Op(\cD',\kkp{J})$ is a Heyting subalgebra of~$D$.
Since~$L$ is completely normal and~$f[D']$ is finite, it follows from Lemma~\ref{L:ConsKer} that $f'\eqdef f\res_{D'}$ has a consonance kernel $\vecm{e_P}{P\in\Ji{D'}}$.
By Lemma~\ref{L:MainExt}, $f$ extends to a unique lattice homomorphism $g\colon D\to L$ such that $g(x)=f'_{\vec{e}}(x)$ whenever $x\in\set{a,b}$.
For any $P\in\Ji{D'}$ such that $P\subseteq P_*\cup x$, $0\notin P_*\cup x$, thus $0\notin P$, that is, $P$ is not the top element of~$\Op(\kkp{I},\kkp{J})$.
Since~$f$ is top-faithful, it follows that $e_P\leq f(P)<\infty$; whence $f'_{\vec{e}}(x)<\infty$.
It follows that~$g$ is top-faithful.
\end{proof}

The ``surjectivity step'' is much easily taken care of:

\begin{lemma}\label{L:RngStep}
Let~$I$ and~$J$ be sets with $I\subset J$ and $J\setminus I$ infinite, let~$L$ be a distributive $0$-lattice, let~$\cD$ be a finite subset of~$\kkp{J}$, and let $c\in L$.
Then every for every top-faithful $0$-lattice homomorphism $f\colon\Op(\kkp{I}\cup\cD,\kkp{J})\to\ip{L}$, there are $e\in\kkp{J}$ and a top-faithful lattice homomorphism $g\colon\Op(\kkp{I}\cup\cD\cup\set{e},\kkp{J})\to\ip{L}$ such that $g(e)=c$.
\end{lemma}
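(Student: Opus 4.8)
The plan is to take for $e$ a \emph{fresh coordinate functional} and reduce everything to the Main Extension Lemma. Since~$\cD$ is finite while $J\setminus I$ is infinite, I would pick $i_0\in J\setminus I$ lying outside the supports of all elements of~$\cD$, and set $e\eqdef\gd_{i_0}$, $a\eqdef\bck{e>0}$, $b\eqdef\bck{e<0}$, $D\eqdef\Op(\kkp{I}\cup\cD,\kkp{J})$, $E\eqdef\Op(\kkp{I}\cup\cD\cup\set{e},\kkp{J})$, and $B\eqdef\BR(D)=\Bool(\kkp{I}\cup\cD,\kkp{J})$. By the choice of~$i_0$, no functional in~$\kkp{I}\cup\cD$ involves the coordinate~$x_{i_0}$, so every member of~$B$ is invariant under arbitrary changes of the $i_0$-th coordinate. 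A nonempty such subset of~$\bck{e>0}$, of~$\bck{e<0}$, or of~$\bck{e\neq0}$ would then be forced to meet the hyperplane~$\bck{e=0}$, which gives $a_B=b_B=(a\vee b)_B=\es$; and an $i_0$-invariant subset of~$\kkp{J}$ containing~$\bck{e>0}$ or~$\bck{e<0}$ must be all of~$\kkp{J}$, which gives $a^B=b^B=\gO$. In particular these five elements are defined, and $(a\vee b)_B=\es=a_B\vee b_B$ (this last equality being also an instance of Corollary~\ref{C:Ca+b}). Together with $a\wedge b=\es$, with the fact that~$E$ is generated as a lattice by $D\cup\set{a,b}$, and with the fact (Wehrung \cite[Lemma~5.4]{MV1}, see also \cite[Lemma~4.1]{RAlg}) that~$D$ is a Heyting subalgebra, hence a semi-Heyting sublattice, of~$E$, this verifies all the hypotheses~\eqref{E=Dab}--\eqref{aveebB} of Lemma~\ref{L:MainExt}, applied with the distributive $0$-lattice~$\ip{L}$ playing the role of~$L$.

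Next I would invoke the implication (ii)$\Rightarrow$(i) of Lemma~\ref{L:MainExt} for the pair $(\ga,\gb)\eqdef(c,0)$, where~$0$ denotes the bottom of~$L$. Because~$f$ is top-faithful, $f(a^B)=f(b^B)=f(\gO)=\infty$, so $\ga\leq f(a^B)$ and $\gb\leq f(b^B)$ hold trivially; moreover $\ga\wedge\gb=c\wedge0=0$, and $\BR(f)(a_B)=\BR(f)(b_B)=\BR(f)(\es)=0\leq\ga,\gb$. Thus condition~(ii) is satisfied, and one obtains a lattice homomorphism $g\colon E\to\ip{L}$ extending~$f$ with $g(a)=c$ and $g(b)=0$. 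Consequently $g(\bck{e\neq0})=g(a)\vee g(b)=c$, which is the asserted identity $g(e)=c$. (No complete normality of~$L$, hence no consonance kernel, is needed here, since we use the equivalence of Lemma~\ref{L:MainExt} directly.)

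Finally I would check that~$g$ is top-faithful. By distributivity every $U\in E$ is a finite union of ``cells'' of the form $\bigcap_{l}\bck{\pm h_l>0}$ with $h_l\in\kkp{I}\cup\cD\cup\set{e}$ (the empty intersection being~$\gO$). If $U\neq\gO$, then none of the cells occurring in such a decomposition is~$\gO$, so each of them is contained in some $\bck{\pm h>0}$ with $h\in\kkp{I}\cup\cD\cup\set{e}$; applying~$g$ yields an upper bound equal to one of $f(\bck{h>0})$, $f(\bck{h<0})$, $g(a)=c$, or $g(b)=0$, each of which lies in~$L$ — the latter two trivially, and the former two because~$f$ is top-faithful and these sets are proper subsets of~$\kkp{J}$ (when $h\neq0$ on account of the hyperplane~$\bck{h=0}$, and when $h=0$ they are empty). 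Hence $g(U)$, being a finite join of members of~$L$, lies in~$L$, so $g$ is top-faithful. The only mildly delicate point in the whole argument is precisely this last verification — that~$\gO$ is the unique element of~$E$ that~$g$ can send to~$\infty$ — together with the bookkeeping in the first paragraph showing that the fresh coordinate degenerates $a_B,b_B,a^B,b^B$; this degeneration is exactly where the hypothesis that $J\setminus I$ be infinite is used.
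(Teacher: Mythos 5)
Your proof is correct, and it reaches the conclusion by a route that is parallel to, but not the same as, the paper's. The paper also chooses a fresh coordinate $e=\gd_j$ with $j\in J\setminus I$ outside the supports of the members of~$\cD$, but then it simply cites the argument of Wehrung \cite[Lemma~8.3]{MV1}: $\Op(\kkp{I}\cup\cD\cup\set{\gd_j},\kkp{J})$ is the internal free amalgamated sum of $\Op(\kkp{I}\cup\cD,\kkp{J})$ and the five-element lattice $\set{\es,\bck{\gd_j>0},\bck{\gd_j<0},\bck{\gd_j\neq0},\kkp{J}}$ in bounded distributive lattices, so the extension with $g(\bck{\gd_j>0})=c$ and $g(\bck{\gd_j<0})=0$ exists for free, and top-faithfulness is noted in one line. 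You instead verify the hypotheses of Lemma~\ref{L:MainExt} directly, using the $i_0$-invariance of every member of $\Bool(\kkp{I}\cup\cD,\kkp{J})$ to compute $a_B=b_B=(a\vee b)_B=\es$ and $a^B=b^B=\gO$, and then apply the implication (ii)$\Rightarrow$(i) with $(\ga,\gb)=(c,0)$ — correctly observing that no complete normality of~$L$ (hence no consonance kernel) is needed for that implication. This degeneration of the relative adjoints is precisely the content of the amalgamated-sum claim, so the two arguments are morally the same; what your version buys is self-containment within this paper's machinery (no appeal to \cite[Lemma~8.3]{MV1}), at the cost of a somewhat longer verification, including the explicit check of top-faithfulness via the cell decomposition, which the paper dispatches by noting that $c<\infty$ and $f$ is top-faithful. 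One cosmetic remark: the conclusion $g(e)=c$ in the statement is an abuse of notation for the value of~$g$ on the generator attached to~$e$; your $g(a)=c$ (or $g(\bck{e\neq0})=c$) serves the only purpose the lemma is put to, namely ensuring $c\in\rng g$.
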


\begin{proof}
Since~$\cD$ is finite and $J\setminus I$ is infinite, there exists $j\in J\setminus I$ not in the support of any element of~$\cD$.
Take $e\eqdef\gd_j$, the $j$th canonical projection $\kkp{J}\twoheadrightarrow\kk$.
By the argument of Wehrung \cite[Lemma~8.3]{MV1}, $\Op(\kkp{I}\cup\cD\cup\set{\gd_j},\kkp{J})$ is the (internal) free amalgamated sum of $\Op(\kkp{I}\cup\cD,\kkp{J})$ and $\set{\es,\bck{\gd_j>0},\bck{\gd_j<0},\bck{\gd_j\neq0},\kkp{J}}$ within the category of bounded distributive lattices.
Hence~$f$ extends to a unique lattice homomorphism $g\colon\Op(\kkp{I}\cup\cD\cup\set{\gd_j},\kkp{J})\to L$ such that $g(\bck{\gd_j>0})=c$ and $g(\bck{\gd_j<0})=0$.
Since $c<\infty$ and~$f$ is top-faithful, it follows that~$g$ is also top-faithful.
\end{proof}

\section{Representing trees of countable lattices}\label{S:ReprTrees}

In this section we will reach the paper's main goal, Theorem~\ref{T:Al1hom}, which states that every completely normal distributive $0$-lattice is a homomorphic image of some~$\Idc{F}$ for some $\kk$-vector lattice~$F$.
In order to reach that result we will in fact prove (cf. Theorem~\ref{T:Part3Ext}) the apparently stronger statement that every \emph{diagram} of countable completely normal distributive $0$-lattices, indexed by a tree in which every element has countable height, can be represented in that fashion.

Towards that goal, our main technical tool is the following ``one-step extension'' theorem, which relies on the results of Section~\ref{S:ExtTF}, together with the observation that for $\cF\subseteq\kkp{I}$, $\Op(\cF,\kkp{I})=\Ops(\cF,\kkp{I})\sqcup\set{\infty}$ (where~$\infty$ denotes here the full space~$\kkp{I}$; so the top-faithful maps $\Op(\cF,\kkp{I})\to\ip{L}$ are exactly the~$\ip{g}$ where $g\colon\Ops(\cF,\kkp{I})\to L$).

\begin{theorem}\label{T:Part1Ext}
Let~$\kk$ be a countable totally ordered division ring, let~$I$ and~$J$ be countable sets with $I\subset J$ and~$J\setminus I$ infinite, let~$K$ and~$L$ be distributive $0$-lattices with~$L$ countable completely normal, let $\gf\colon K\to L$ be a $0$-lattice homomorphism, and let $f\colon\Ops(\kkp{I},\kkp{I})\to K$ be a $0$-lattice homomorphism.
Then there exists a surjective lattice homomorphism $g\colon\Ops(\kkp{J},\kkp{J})\twoheadrightarrow L$ such that $g\circ\eps_{I,J}=\gf\circ f$.
\end{theorem}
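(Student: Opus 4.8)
The plan is to obtain~$g$ as the limit of a tower of one-step extensions, interleaving applications of Lemma~\ref{L:DomStep} (which enlarges the domain, eventually to all of~$\Ops(\kkp{J},\kkp{J})$) with applications of Lemma~\ref{L:RngStep} (which forces surjectivity onto~$L$). First I would deal with the base case. Viewing each~$a\in\kkp{I}$ as a linear functional on~$\kkp{J}$, it factors through the restriction map~$\kkp{J}\to\kkp{I}$, so~$\eps_{I,J}$ sends the generator~$\bck{a>0}$ (resp.,~$\bck{a<0}$) of~$\Ops(\kkp{I},\kkp{I})$ to the set~$\bck{a>0}$ (resp.,~$\bck{a<0}$) computed in~$\kkp{J}$; hence~$\eps_{I,J}$ restricts to an isomorphism from~$\Ops(\kkp{I},\kkp{I})$ onto~$\Ops(\kkp{I},\kkp{J})$. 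Since every element of an~$\Ops$ avoids the zero vector (each of its generators does), neither~$\kkp{I}$ nor~$\kkp{J}$ belongs to the respective~$\Ops$, so that isomorphism extends, by~$\kkp{I}\mapsto\kkp{J}$, to an isomorphism of bounded distributive lattices~$\Op(\kkp{I},\kkp{I})\to\Op(\kkp{I},\kkp{J})$. Transporting~$\gf\circ f$ along this isomorphism and then sending the top~$\kkp{J}$ to~$\infty$ yields a top-faithful $0$-lattice homomorphism~$h_0\colon\Op(\kkp{I},\kkp{J})\to\ip{L}$ (top-faithfulness being automatic since~$\gf\circ f$ has values in~$L$). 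As~$\kk$, $I$ and~$J$ are countable, $\kkp{J}$ is countable; fix enumerations~$\kkp{J}=\setm{d_m}{m<\go}$ and~$L=\setm{c_m}{m<\go}$.

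Next I would construct recursively an increasing sequence~$\vecm{\cD_n}{n<\go}$ of finite subsets of~$\kkp{J}$, with~$\cD_0\eqdef\es$, together with top-faithful homomorphisms~$h_n\colon\Op(\kkp{I}\cup\cD_n,\kkp{J})\to\ip{L}$ such that~$h_{n+1}$ extends~$h_n$; then each~$h_n$ extends~$h_0$, so each~$h_n$ is in fact a $0$-lattice homomorphism. At an even stage~$n=2m$, since~$L$ is completely normal, Lemma~\ref{L:DomStep} applied with~$e\eqdef d_m$ extends~$h_{2m}$ to a top-faithful homomorphism~$h_{2m+1}$ defined on~$\Op(\kkp{I}\cup\cD_{2m}\cup\set{d_m},\kkp{J})$; put~$\cD_{2m+1}\eqdef\cD_{2m}\cup\set{d_m}$. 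At an odd stage~$n=2m+1$, since~$J\setminus I$ is infinite while~$\cD_{2m+1}$ is finite, Lemma~\ref{L:RngStep} applies with~$c\eqdef c_m$ and yields some~$e\in\kkp{J}$ and a top-faithful extension~$h_{2m+2}$ of~$h_{2m+1}$, defined on~$\Op(\kkp{I}\cup\cD_{2m+1}\cup\set{e},\kkp{J})$, such that~$c_m$ lies in the range of~$h_{2m+2}$; put~$\cD_{2m+2}\eqdef\cD_{2m+1}\cup\set{e}$.

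Finally I would pass to the limit. Each~$\Op(\kkp{I}\cup\cD_n,\kkp{J})$ is a sublattice of~$\Op(\kkp{J},\kkp{J})$, the chain is increasing, and, since~$\setm{d_m}{m<\go}=\kkp{J}$ forces every finite subset of~$\kkp{J}$ to lie in~$\kkp{I}\cup\cD_n$ for some~$n$, its union is the whole of~$\Op(\kkp{J},\kkp{J})$. The~$h_n$ cohere, so~$h\eqdef\bigcup_{n<\go}h_n$ is a lattice homomorphism~$\Op(\kkp{J},\kkp{J})\to\ip{L}$, top-faithful because each~$h_n$ is. As~$\Op(\kkp{J},\kkp{J})=\Ops(\kkp{J},\kkp{J})\sqcup\set{\kkp{J}}$, we get~$h=\ip{g}$ for a unique lattice homomorphism~$g\colon\Ops(\kkp{J},\kkp{J})\to L$. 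The odd stages make every~$c_m$ a value of~$g$, so~$g$ is surjective; and~$h$ extends~$h_0$, whence~$g\circ\eps_{I,J}=\gf\circ f$, as required.

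I do not expect a genuinely hard step: all the content sits in the two one-step lemmas of Section~\ref{S:ExtTF}, and the rest is bookkeeping. The only points deserving care are internal to the dovetailing: that at \emph{every} odd stage a coordinate of~$J\setminus I$ lies outside the supports of the current finite set~$\cD_n$ (which is exactly where the hypothesis that~$J\setminus I$ is infinite is used), that the even stages really do exhaust~$\kkp{J}$ so that the colimit~$g$ is defined on all of~$\Ops(\kkp{J},\kkp{J})$, and the routine checks that top-faithfulness and preservation of~$0$ survive the inductive extensions and their union.
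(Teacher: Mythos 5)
Your proposal is correct and follows essentially the same route as the paper: a dovetailed $\go$-chain of one-step extensions, alternating Lemma~\ref{L:DomStep} on an enumeration of~$\kkp{J}$ with Lemma~\ref{L:RngStep} on an enumeration of~$L$, starting from $\gf\circ f$ transported along~$\eps_{I,J}$ and passing to the union at the end. The extra care you take with the base-case identification of~$\Ops(\kkp{I},\kkp{I})$ with~$\Ops(\kkp{I},\kkp{J})$ and with top-faithfulness is exactly the bookkeeping the paper leaves implicit.
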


The settings for Theorem~\ref{T:Part1Ext} can be read on Figure~\ref{Fig:Part1Ext}.
Its proof can be followed on Figure~\ref{Fig:Part1Ext2}.

\begin{figure}[htb]
\begin{tikzcd}
\centering
K\arrow[r,"\gf"] & L \\
\Ops(\kkp{I},\kkp{I})\arrow[u,"f"]\arrow[r,"\eps_{I,J}",hook] &
\Ops(\kkp{J},\kkp{J})\arrow[u,"g"',dashed,twoheadrightarrow]
\end{tikzcd}
\caption{A commutative diagram for Theorem~\ref{T:Part1Ext}}
\label{Fig:Part1Ext}
\end{figure}

\begin{figure}[htb]
\begin{tikzcd}
\centering
K\arrow[r,"\gf"] & L & \\
\Ops(\kkp{I},\kkp{I})\arrow[u,"f"]\arrow[r,"\eps_{I,J}",hook] &
\Ops(\kkp{I}\cup\cD_n,\kkp{J})\arrow[u,"g_n"]
\arrow[r,"\subseteq",hook] &
\Ops(\kkp{I}\cup\cD_{n+1},\kkp{J})\arrow[ul,"g_{n+1}"] 
\end{tikzcd}
\caption{Illustrating the proof of Theorem~\ref{T:Part1Ext}}
\label{Fig:Part1Ext2}
\end{figure}

\begin{proof}
An iterative application of Lemmas~\ref{L:DomStep} and~\ref{L:RngStep}, similar to the proof of Wehrung \cite[Theorem~9.1]{MV1} but easier since we do not need any analogue of the ``closure step" \cite[Lemma~7.1]{MV1}.
Let $\kkp{J}=\setm{v_n}{n<\go}$ and $L=\setm{c_n}{n<\go}$.
Given an extension $g_n\colon\Ops(\kkp{I}\cup\cD_n,\kkp{J})\to L$ of $g_0\eqdef\gf\circ f$, where $\cD_n\subset\kkp{J}$ is finite, we extend the top-faithful extension $\ip{g}_n\colon\Op(\kkp{I}\cup\cD_n,\kkp{J})\to\ip{L}$ of~$g_n$ to a top-faithful lattice homomorphism $\ip{g}_{n+1}\colon\Op(\kkp{I}\cup\cD_{n+1},\kkp{J})\to\ip{L}$, with $\cD_n\subseteq\cD_{n+1}$, $v_{\lfloor n/2\rfloor}\in\cD_{n+1}$ if~$n$ is even (via Lemma~\ref{L:DomStep}), and $c_{\lfloor n/2\rfloor}\in\rng g_{n+1}$ if~$n$ is odd (via Lemma~\ref{L:RngStep}).
The common extension~$g$ of all~$g_n$ is as required.
\end{proof}

By virtue of Lemma~\ref{L:BakBey}, Theorem~\ref{T:Part1Ext} can be recast in terms of \lidl\ lattices of free vector lattices over~$\kk$, as follows.

\begin{theorem}\label{T:Part2Ext}
Let~$\kk$ be a countable totally ordered division ring, let~$I$ and~$J$ be countable sets with $I\subset J$ and~$J\setminus I$ infinite, let~$K$ and~$L$ be distributive $0$-lattices with~$L$ countable completely normal, let $\gf\colon K\to L$ be a $0$-lattice homomorphism, and let $f\colon\Idc\FL(I,\kk)\to K$ be a $0$-lattice homomorphism.
Denote by $\eta_{I,J}\colon\Idc\FL(I,\kk)\hookrightarrow\Idc\FL(J,\kk)$ the canonical embedding.
Then there exists a surjective lattice homomorphism $g\colon\Idc\FL(J,\kk)\twoheadrightarrow L$ such that $g\circ\eta_{I,J}=\gf\circ f$.
\end{theorem}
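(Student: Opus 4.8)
The plan is to transport Theorem~\ref{T:Part1Ext} along the Baker--Beynon isomorphisms furnished by Lemma~\ref{L:BakBey}\eqref{IdcFlIkk}. Write $\gq_I\colon\Idc\FL(I,\kk)\to\Ops(\kkp{I},\kkp{I})$ and $\gq_J\colon\Idc\FL(J,\kk)\to\Ops(\kkp{J},\kkp{J})$ for the isomorphisms $\seq{x}\mapsto\bck{x\neq0}$.

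First I would verify that the canonical embedding $\eta_{I,J}$ corresponds, under $\gq_I$ and~$\gq_J$, to the Boolean embedding $\eps_{I,J}$, that is, $\gq_J\circ\eta_{I,J}=\eps_{I,J}\circ\gq_I$. The inclusion $I\hookrightarrow J$ induces the vector-lattice embedding $u\colon\FL(I,\kk)\hookrightarrow\FL(J,\kk)$, and $\eta_{I,J}$ is, by definition, the $0$-lattice embedding $\seq{x}\mapsto\seq{u(x)}$ that~$u$ induces on~$\Idc$. Regarding (via Lemma~\ref{L:BakBey}\eqref{FlIkk}) every element of $\FL(I,\kk)$, resp.\ $\FL(J,\kk)$, as a function on $\kkp{I}$, resp.\ $\kkp{J}$, the map~$u$ sends~$\gd_i$ to~$\gd_i$ for $i\in I$, and, since lattice operations on functions are computed pointwise, $u(x)(y)=x(y\res_I)$ for all $x\in\FL(I,\kk)$ and $y\in\kkp{J}$. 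Hence $\gq_J\pI{\seq{u(x)}}=\bck{u(x)\neq0}=\setm{y\in\kkp{J}}{y\res_I\in\bck{x\neq0}}=\eps_{I,J}\pI{\bck{x\neq0}}=\eps_{I,J}\pI{\gq_I\pI{\seq{x}}}$, as claimed.

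Then I would apply Theorem~\ref{T:Part1Ext} to the $0$-lattice homomorphism $f\circ\gq_I^{-1}\colon\Ops(\kkp{I},\kkp{I})\to K$ (all its hypotheses being met), obtaining a surjective lattice homomorphism $h\colon\Ops(\kkp{J},\kkp{J})\twoheadrightarrow L$ with $h\circ\eps_{I,J}=\gf\circ f\circ\gq_I^{-1}$. Setting $g\eqdef h\circ\gq_J$ then yields a surjective lattice homomorphism $g\colon\Idc\FL(J,\kk)\twoheadrightarrow L$ with $g\circ\eta_{I,J}=h\circ\gq_J\circ\eta_{I,J}=h\circ\eps_{I,J}\circ\gq_I=\gf\circ f\circ\gq_I^{-1}\circ\gq_I=\gf\circ f$, as required.

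No genuine difficulty arises: the whole argument is bookkeeping around the Baker--Beynon representation, the only (mild) point being the identification of~$\eta_{I,J}$ with~$\eps_{I,J}$ carried out above.
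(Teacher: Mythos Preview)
Your proposal is correct and takes precisely the approach the paper indicates: the paper does not give a separate proof of Theorem~\ref{T:Part2Ext} but merely prefaces it with ``By virtue of Lemma~\ref{L:BakBey}, Theorem~\ref{T:Part1Ext} can be recast in terms of \lidl\ lattices of free vector lattices over~$\kk$,'' and your argument is exactly that recasting, spelled out in full (including the compatibility check $\gq_J\circ\eta_{I,J}=\eps_{I,J}\circ\gq_I$, which the paper leaves implicit).
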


By using the functoriality of the assignment $I\mapsto\Idc\FL(I,\kk)$, Theorem~\ref{T:Part2Ext} can further be extended to diagrams indexed by trees, as follows.

\begin{theorem}\label{T:Part3Ext}
Let~$\kk$ be a countable totally ordered division ring, let~$T$ be a tree in which every element has countable height, and let $\vec{L}\eqdef\vecm{L_s,\gf_{s,t}}{s\leq t\text{ in }T}$ be a commutative $T$-indexed diagram of distributive $0$-lattices such that~$L_t$ is countable completely normal whenever $t\in T\setminus\set{\bot}$.
Let $I_{\bot}\subseteq\set{\bot}\times\go$ and set\linebreak $I_t\eqdef(T\dnw t)\times\go$ whenever $t\in T\setminus\set{\bot}$.
Set $\vec{I}\eqdef\vecm{I_s,\eta_{I_s,I_t}}{s\leq t\text{ in }T}$.
Then every $0$-lattice homomorphism $\chi_{\bot}\colon\Idc\FL(I_{\bot},\kk)\to L_{\bot}$ extends to a natural transformation $\vec{\chi}\colon\Idc\FL(\vec{I},\kk)\todot\vec{L}$ such that~$\chi_t$ is a surjective lattice homomorphism whenever $t\in T\setminus\set{\bot}$.
\end{theorem}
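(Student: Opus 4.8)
The plan is to build the natural transformation $\vec\chi$ by transfinite recursion along the tree~$T$, exploiting the fact that every element of~$T$ has countable height so that, for each~$t$, the set $T\dnw t$ is a countable well-ordered chain and $I_t=(T\dnw t)\times\go$ is countable. First I would fix, for each $t\in T$, an enumeration of $T\dnw t$ compatible with the tree order, and note that $I_t$ is the directed union of the $I_s$ for $s<t$ together with the ``new'' copy $\set{t}\times\go$; also, for limit-type $t$ (i.e.\ $t$ with no immediate predecessor), $I_t=\bigcup_{s<t}I_s$ up to the extra countable block, and for successor-type~$t$ with predecessor~$t^-$ we have $I_t\supset I_{t^-}$ with $I_t\sd I_{t^-}$ infinite countable. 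The recursion constructs $\chi_t\colon\Idc\FL(I_t,\kk)\to L_t$ satisfying naturality with all $\chi_s$, $s\le t$, and surjectivity for $t\neq\bot$.

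The base case is given: $\chi_\bot$ is the hypothesized map. For a successor-type $t$ with predecessor~$t^-$, I would apply Theorem~\ref{T:Part2Ext} with $I\eqdef I_{t^-}$, $J\eqdef I_t$, $K\eqdef L_{t^-}$, $L\eqdef L_t$, $\gf\eqdef\gf_{t^-,t}$, and $f\eqdef\chi_{t^-}$; the hypotheses are met since $I_{t^-}\subset I_t$ with $I_t\sd I_{t^-}$ infinite countable, $L_t$ is countable completely normal, and $\gf_{t^-,t}\colon L_{t^-}\to L_t$ is a $0$-lattice homomorphism. This yields a surjective $\chi_t\colon\Idc\FL(I_t,\kk)\twoheadrightarrow L_t$ with $\chi_t\circ\eta_{I_{t^-},I_t}=\gf_{t^-,t}\circ\chi_{t^-}$; composing with the naturality already established below~$t^-$ gives naturality for all $s\le t$. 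For a limit-type $t$, the subtlety is that $T\dnw t\sd\set t$ need not have a largest element, so one cannot directly invoke a single predecessor; instead I would first build the colimit map $\chi_t'\colon\Idc\FL\pI{\bigcup_{s<t}I_s,\kk}\to L_t$ as the canonical factorization through the colimit of the diagram $\vecm{\Idc\FL(I_s,\kk)}{s<t}$ into $L_t$ (here using that $L_t$ is the target of a cocone, namely the compositions $\gf_{s,t}\circ\chi_s$, which agree by the inductively established naturality), then peel off the extra block: writing $I_t=\bigcup_{s<t}I_s\,\cup\,(\set t\times\go)$ we are again in the situation of Theorem~\ref{T:Part2Ext} with $I\eqdef\bigcup_{s<t}I_s$, $J\eqdef I_t$ (whose difference is infinite countable), $K\eqdef L_t$, $\gf\eqdef\id_{L_t}$, $f\eqdef\chi_t'$, producing the required surjective $\chi_t$. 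The functoriality of $I\mapsto\Idc\FL(I,\kk)$ and of $\FL(-,\kk)$ itself, together with Lemma~\ref{L:BakBey}, is what licenses forming these colimits and identifying the connecting maps $\eta_{I_s,I_t}$ with the canonical embeddings.

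I expect the main obstacle to be the bookkeeping at limit nodes of~$T$: one must check that the colimit $\varinjlim_{s<t}\Idc\FL(I_s,\kk)$ is again of the form $\Idc\FL\pI{\bigcup_{s<t}I_s,\kk}$ with the connecting maps being exactly the $\eta$'s (this follows from the fact that $\FL(-,\kk)$ preserves directed colimits and $\Idc$ preserves directed colimits of $\ell$-groups, or directly from the description in Lemma~\ref{L:BakBey}\eqref{IdcFlIkk} of $\Idc\FL(I,\kk)$ as $\Ops(\kkp I,\kkp I)$, which is visibly the directed union of the $\Ops(\kkp{I_s},\kkp{I_s})$ as $s$ ranges below~$t$), and that the cocone $\vecm{\gf_{s,t}\circ\chi_s}{s<t}$ into $L_t$ is well-defined, i.e.\ compatible, which is precisely the naturality hypothesis carried by the induction. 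Once the colimit map $\chi_t'$ is in hand, the ``new block'' step is a clean single application of Theorem~\ref{T:Part2Ext}. A secondary point requiring care is that the height of~$t$ being countable is exactly what guarantees $I_t$ is countable, so that Theorem~\ref{T:Part2Ext} (which demands countable $I,J$) applies at every stage; the global recursion then runs through all of~$T$ with no cardinality restriction on~$T$ itself. Finally, verifying that the family $\vec\chi=\vecm{\chi_t}{t\in T}$ is a genuine natural transformation amounts to checking commutativity of squares $\Idc\FL(I_s,\kk)\to\Idc\FL(I_t,\kk)\to L_t$ versus $\Idc\FL(I_s,\kk)\to L_s\to L_t$ for arbitrary $s\le t$, which reduces by the tree structure and transitivity of $\gf$ to the successor-step identities already secured.
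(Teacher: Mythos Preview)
Your proposal is correct and follows essentially the same approach as the paper: apply Theorem~\ref{T:Part2Ext} at each node~$t$, using the colimit of the data below~$t$ as the input. The only organizational differences are that the paper uses Zorn's Lemma on lower subsets of~$T$ rather than transfinite recursion, and it treats successor and limit nodes uniformly (always forming $I_{<t}\eqdef\bigcup_{s<t}I_s$ and the colimit $L_{<t}\eqdef\varinjlim_{s<t}L_s$, then applying Theorem~\ref{T:Part2Ext} with $K=L_{<t}$, $\gf=\gf_{<t,t}$), whereas you split into cases and, at limit nodes, skip the intermediate colimit~$L_{<t}$ by taking $K=L_t$ and $\gf=\id_{L_t}$; both variants are equivalent.
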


\begin{proof}
The proof can be partly followed on Figure~\ref{Fig:Part3Ext}.
\begin{figure}[htb]
\begin{tikzcd}
\centering
L_s\arrow[r,"\gf_{s,<t}","\varinjlim"']\arrow[rr,"\gf_{s,t}"',bend left] &
L_{<t}\arrow[r,"\gf_{<t,t}"] & L_t\\
\Idc\FL(I_s,\kk)\arrow[u,"\chi_s"]
\arrow[r,"\eta_{I_s,I_{<t}}"',"\varinjlim",hook]
\arrow[rr,"\eta_{I_s,I_t}",bend right,hook] & \Idc\FL(I_{<t},\kk)
\arrow[u,"\chi_{<t}"]\arrow[r,"\eta_{I_{<t},I_t}"',hook] & \Idc\FL(I_t,\kk)\arrow[u,"\chi_t",twoheadrightarrow]
\end{tikzcd}
\caption{Illustrating the proof of Theorem~\ref{T:Part3Ext}}
\label{Fig:Part3Ext}
\end{figure}
By Zorn's Lemma, there exists a maximal lower subset~$T'$ of~$T$, containing~$\set{\bot}$, on which the conclusion of Theorem~\ref{T:Part3Ext} holds.
Suppose, by way of contradiction, that $T'\neq T$ and let~$t$ be a minimal element of $T\setminus T'$;
so $T'\cup\set{t}$ is also a lower subset of~$T$.
Since the height of~$t$ is countable, so are the lattice $L_{<t}\eqdef\varinjlim_{s<t}L_s$ (with transition maps $\gf_{s,s'}$ where $s\leq s'<t$ and limiting maps $\gf_{s,<t}\colon L_s\to L_{<t}$ for $s<t$) and the set $I_{<t}\eqdef\bigcup\setm{I_s}{s<t}$.
The universal property of the colimit ensures the existence of unique $0$-lattice homomorphisms
 \[
 \eta_{I_{<t},I_t}\colon\Idc\FL(I_{<t},\kk)=
 \varinjlim_{s<t}\Idc\FL(I_s,\kk)\to\Idc\FL(I_t,\kk)
  \]
and $\gf_{<t,t}\colon L_{<t}\to L_t$, such that $\eta_{I_{<t},I_t}\circ\eta_{I_s,I_{<t}}=\eta_{I_s,I_t}$ and $\gf_{<t,t}\circ\gf_{s,<t}=\gf_{s,t}$ whenever $s<t$.
Further, the natural transformation $\vecm{\chi_s}{s<t}$ induces a unique $0$-lattice homomorphism
 \[
 \chi_{<t}\colon\Idc\FL(I_{<t},\kk)\to L_{<t}
 \]
such that $\chi_{<t}\circ\eta_{I_s,I_{<t}}=\gf_{s,<t}\circ\chi_s$ whenever $s<t$.
By Theorem~\ref{T:Part2Ext}, there exists a surjective lattice homomorphism $\chi_t\colon\Idc\FL(I_t,\kk)\twoheadrightarrow\nobreak L_t$ such that $\chi_t\circ\eta_{I_{<t},I_t}=\gf_{<t,t}\circ\chi_{<t}$.
Therefore, for each $s<t$,
 \[
 \chi_t\circ\eta_{I_s,I_t}=\chi_t\circ\eta_{I_{<t},I_t}\circ\eta_{I_s,I_{<t}}
 =\gf_{<t,t}\circ\chi_{<t}\circ\eta_{I_s,I_{<t}}=\gf_{<t,t}\circ\gf_{s,<t}\circ\chi_s
 =\gf_{s,t}\circ\chi_s\,.
 \]
This shows that our conclusion holds at~$T'\cup\set{t}$, in contradiction with the maximality assumption on~$T'$.
\end{proof}

This leads us to the following positive solution of the problem stated at the end of Wehrung~\cite{MVRS}.

\begin{theorem}\label{T:Al1hom}
Let~$\kk$ be a countable totally ordered division ring.
Then every completely normal distributive $0$-lattice~$L$ with at most~$\aleph_1$ elements is a surjective homomorphic image of~$\Idc{F}$ for some vector lattice~$F$ over~$\kk$.
\end{theorem}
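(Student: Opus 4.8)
The plan is to derive Theorem~\ref{T:Al1hom} as an essentially immediate consequence of Theorem~\ref{T:Part3Ext}, by choosing an appropriate tree~$T$ and diagram~$\vec{L}$. First I would fix a completely normal distributive $0$-lattice~$L$ with $\card{L}\leq\aleph_1$. If~$L$ is countable, the result already follows from the countable case (or directly from Theorem~\ref{T:Part2Ext} with $K$ trivial), so assume $\card{L}=\aleph_1$. Write $L$ as the union of a continuous increasing chain $\vecm{L_{\xi}}{\xi<\go_1}$ of countable $0$-sublattices, with $L_0$ chosen to contain~$0$; here ``continuous'' means $L_{\gl}=\bigcup_{\xi<\gl}L_{\xi}$ for every limit $\gl<\go_1$. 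The point to verify is that each~$L_{\xi}$ can be taken \emph{completely normal}: since complete normality is the satisfaction of the $\forall\exists$-sentence displayed in the Introduction, a standard Löwenheim--Skolem / closure argument lets us build the chain so that every~$L_{\xi}$ is closed under suitable Skolem functions witnessing consonance, hence is itself completely normal (as a $0$-sublattice in which every pair is internally consonant).

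Next I would package this chain as a diagram indexed by a tree. Take $T\eqdef\ip{\go_1}$ in the notation of the excerpt — that is, the chain~$\go_1$ with a new bottom element~$\bot$ adjoined — or, even more simply, take $T$ to be the chain $1+\go_1$ with $\bot$ its least element; every element of this tree has countable height. Define the $T$-indexed diagram $\vec{L}$ by $L_{\bot}\eqdef\set{0}$ (the one-element lattice, which is vacuously completely normal) and $L_t\eqdef L_{\xi}$ when~$t$ corresponds to $\xi<\go_1$, with the transition maps~$\gf_{s,t}$ the inclusion maps (and $\gf_{\bot,t}$ the unique $0$-lattice homomorphism $\set{0}\to L_t$). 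This is a commutative $T$-indexed diagram of distributive $0$-lattices with every $L_t$ ($t\neq\bot$) countable completely normal, exactly as required by Theorem~\ref{T:Part3Ext}.

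Now apply Theorem~\ref{T:Part3Ext} with $I_{\bot}\eqdef\es\subseteq\set{\bot}\times\go$, and let $\chi_{\bot}\colon\Idc\FL(I_{\bot},\kk)=\set{0}\to L_{\bot}=\set{0}$ be the trivial map. The theorem yields a natural transformation $\vec{\chi}\colon\Idc\FL(\vec{I},\kk)\todot\vec{L}$ with each $\chi_t$ ($t\neq\bot$) a surjective lattice homomorphism. Pass to the colimit over~$T$: since~$T$ has a cofinal chain of order type~$\go_1$, we get $\varinjlim_t\Idc\FL(I_t,\kk)\cong\Idc\FL(I,\kk)$ where $I\eqdef\bigcup_{\xi<\go_1}I_{\xi}=\go_1\times\go$ (using the functoriality of $I\mapsto\Idc\FL(I,\kk)$ and the fact that this functor preserves directed colimits), and $\varinjlim_t L_t=L$. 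The induced map $\chi\colon\Idc\FL(I,\kk)\to L$ is then a lattice homomorphism, and it is surjective because each~$\chi_t$ is surjective and $L=\bigcup_{\xi<\go_1}L_{\xi}$. Since $\FL(I,\kk)$ is a vector lattice over~$\kk$, setting $F\eqdef\FL(I,\kk)$ finishes the proof.

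The only real obstacle is the first step: ensuring the filtration $\vecm{L_{\xi}}{\xi<\go_1}$ can be chosen so that each~$L_{\xi}$ is completely normal, rather than merely a $0$-sublattice of the completely normal lattice~$L$ (complete normality does not pass to sublattices in general). This is handled by the usual elementary-submodel or Skolem-closure argument — build each~$L_{\xi+1}$ by iterating, $\go$ times, the operation of adjoining, for every pair $(a,b)$ of elements already present, a choice of witnesses $x,y$ with $a\vee b=a\vee y=x\vee b$ and $x\wedge y=0$, together with closure under $\vee,\wedge$; take unions at limits. Everything else is formal colimit bookkeeping, already carried out inside Theorems~\ref{T:Part2Ext} and~\ref{T:Part3Ext}.
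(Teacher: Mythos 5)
Your proof is correct and follows essentially the same route as the paper: express $L$ as the union of an ascending $\go_1$-chain of countable completely normal $0$-sublattices, apply Theorem~\ref{T:Part3Ext} to that chain, and pass to the colimit. The only difference is that you spell out the Skolem-closure argument guaranteeing that each $L_{\xi}$ can be taken completely normal (a point the paper asserts without comment), and that detail is handled correctly.
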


\begin{proof}
Write~$L$ as the directed union of an ascending $\go_1$-sequence $\vec{L}=\vecm{L_{\xi}}{\xi<\go_1}$ of countable completely normal distributive $0$-lattices, with $L_0=\set{0}$.
Theorem~\ref{T:Part3Ext}, applied to the well-ordered chain~$\go_1$, yields an $\go_1$-indexed commutative diagram $\vec{F}=\vecm{F_{\xi},f_{\xi,\eta}}{\xi\leq\eta<\go_1}$ of $\kk$-vector lattices together with a natural transformation $\vec{\chi}\colon\Idc\vec{F}\todot\vec{L}$ all of whose components are surjective lattice homomorphisms.
Letting $F\eqdef\varinjlim\vec{F}$, the universal property of the colimit yields a surjective homomorphism from~$\Idc{F}$ onto~$L$.
\end{proof}

Due to Wehrung \cite[Corollary~9.5]{RAlg}, Theorem~\ref{T:Al1hom} cannot be generalized to \emph{uncountable} totally ordered division rings~$\kk$.
On the other hand, setting~$\kk$ as any countable Archimedean totally ordered field (for example the rationals), $\Idc{F}$ is identical to the \lidl\ lattice of the underlying \lgrp~of~$F$.
Hence,

\begin{corollary}\label{C:Al1hom}
Every completely normal distributive $0$-lattice~$L$ with at most~$\aleph_1$ elements is a surjective homomorphic image of~$\Idc{F}$ for some Abelian \lgrp~$F$.
\end{corollary}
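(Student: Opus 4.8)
The plan is to derive Corollary~\ref{C:Al1hom} as a direct specialization of Theorem~\ref{T:Al1hom}. First I would take $\kk\eqdef\QQ$, the field of rationals with its usual order, which is a countable Archimedean totally ordered field and in particular a countable totally ordered division ring. Applying Theorem~\ref{T:Al1hom} with this choice of~$\kk$ produces a vector lattice~$F$ over~$\QQ$ together with a surjective lattice homomorphism from~$\Idc{F}$ onto~$L$, where here~$\Idc{F}$ denotes the lattice of principal \lidl{s} of the $\QQ$-vector lattice~$F$.

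It then remains to identify this~$\Idc{F}$ with the lattice of principal \lidl{s} of the underlying Abelian \lgrp~of~$F$. For that I would check that, because the scalar field~$\QQ$ is Archimedean, every \lidl~$I$ of the \lgrp~$F$ is automatically closed under scalar multiplication: if $x\in I$ and $q=m/n$ with $m\in\ZZ$ and $n$ a positive integer, then $mx\in I$ since~$I$ is a subgroup of~$F$, and since
 \[
 0\leq|qx|=\frac{|m|}{n}\,|x|\leq|m|\,|x|=|mx|\in I\,,
 \]
order-convexity of~$I$ yields $qx\in I$ (the cases $q\leq 0$ reducing to this one). Hence an \lidl\ of~$F$ in the vector-lattice sense is exactly an \lidl\ of~$F$ in the \lgrp\ sense; consequently the principal ones agree, and $\Idc{F}$ is the same lattice under either interpretation. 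Therefore the surjection provided by Theorem~\ref{T:Al1hom} exhibits~$L$ as a surjective homomorphic image of~$\Idc{F}$ for the Abelian \lgrp~$F$, which is the claim.

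I do not expect any real obstacle here: all the substance sits in Theorem~\ref{T:Al1hom}, and the passage to Abelian \lgrp{s} rests only on the elementary remark above (already recorded in the paragraph preceding the statement) that an order-convex subgroup of a rational vector lattice is automatically a $\QQ$-subspace. The same argument works verbatim with~$\QQ$ replaced by any countable Archimedean totally ordered field, so the choice of~$\QQ$ is merely the most economical one.
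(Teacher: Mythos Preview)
Your proposal is correct and follows exactly the same approach as the paper: specialize Theorem~\ref{T:Al1hom} to a countable Archimedean totally ordered field (the rationals being the natural choice), and use the fact that for such~$\kk$ the vector-lattice \lidl{s} of~$F$ coincide with the \lgrp\ \lidl{s} of the underlying Abelian \lgrp. Your explicit verification of this last point is slightly more detailed than the paper's one-line justification, but the argument is the same.
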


By applying Stone duality for distributive $0$-lattices, we obtain the following formulation in terms of spectra.

\begin{corollary}\label{C:Al1homSp}
Every completely normal generalized spectral space with at most~$\aleph_1$ compact open sets embeds, as a spectral subspace, into the $\ell$-spectrum of an Abelian \lgrp.
\end{corollary}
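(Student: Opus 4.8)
The plan is to transcribe Corollary~\ref{C:Al1hom} through Stone duality for distributive $0$-lattices, which dually equates the category of distributive $0$-lattices with $0$-lattice homomorphisms and the category of generalized spectral spaces with spectral maps (continuous maps pulling back compact open sets to compact open sets); see Gr\"atzer \cite[\S~II.5]{LTF}, Johnstone \cite[\S~II.3]{Johnst1982}, Rump and Yang~\cite{RumYan2009} for the setting without top element, and Wehrung \cite[\S~2.2]{RAlg} for a summary. Under this duality, a generalized spectral space~$X$ corresponds to the distributive $0$-lattice~$K(X)$ of its compact open subsets; the property that~$X$ be completely normal corresponds exactly to~$K(X)$ being a completely normal lattice in the sense of Section~\ref{S:Basic} (this is the classical correspondence behind the completely normal lattices of Bigard, Keimel, and Wolfenstein~\cite{BKW}, cf. Section~\ref{S:Intro}); the requirement that~$X$ have at most~$\aleph_1$ compact open sets means precisely $\card K(X)\les\aleph_1$; and for any Abelian \lgrp~$F$, the space~$\Spec{F}$ is generalized spectral with compact open lattice canonically isomorphic to~$\Idc{F}$.

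First I would fix a completely normal generalized spectral space~$X$ with at most~$\aleph_1$ compact open sets and set $L\eqdef K(X)$, so that~$L$ is a completely normal distributive $0$-lattice with $\card L\les\aleph_1$. By Corollary~\ref{C:Al1hom}, there is an Abelian \lgrp~$F$ together with a surjective lattice homomorphism $\pi\colon\Idc{F}\twoheadrightarrow L$. Since~$L$ has a least element and~$\pi$ is onto, $\pi$ is in particular a $0$-lattice homomorphism, hence it has a Stone dual, namely a spectral map $\pi^{\ast}\colon X\to\Spec{F}$ (identifying~$X$ with the dual space of~$K(X)$ and~$\Spec{F}$ with the dual space of~$\Idc{F}$).

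The key step is that the Stone dual of a \emph{surjective} $0$-lattice homomorphism is a homeomorphism onto a \emph{spectral subspace} of the target. Concretely, surjectivity of~$\pi$ forces~$\pi^{\ast}$ to be injective; its image $Y\eqdef\pi^{\ast}[X]$, equipped with the subspace topology, is a generalized spectral space; the inclusion $Y\hookrightarrow\Spec{F}$ is a spectral map; and~$K(Y)$ is, \emph{via} restriction, identified with the quotient~$L$ of $\Idc{F}=K(\Spec{F})$. Thus~$\pi^{\ast}$ is a homeomorphism of~$X$ onto the spectral subspace~$Y$ of~$\Spec{F}$, which is the desired embedding; this yields Corollary~\ref{C:Al1homSp}.

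I do not anticipate a genuine obstacle here: the argument is pure Stone-duality bookkeeping once Corollary~\ref{C:Al1hom} is in hand. The only points that require some care are (i)~using the version of the duality \emph{without} top element, so that ``generalized spectral space'' is the correct dual notion and the $0$-lattice homomorphism~$\pi$ dualizes as claimed; (ii)~recording that ``completely normal'' transfers across the duality, which is classical; and (iii)~checking that surjective $0$-lattice homomorphisms correspond precisely to spectral-subspace inclusions (and not merely to some wider class of spectral embeddings), which again is standard but worth stating explicitly.
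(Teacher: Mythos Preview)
Your proposal is correct and follows exactly the approach the paper indicates: the paper states Corollary~\ref{C:Al1homSp} immediately after Corollary~\ref{C:Al1hom} with only the sentence ``By applying Stone duality for distributive $0$-lattices, we obtain the following formulation in terms of spectra,'' and you have simply unpacked that Stone-duality bookkeeping in detail. There is nothing to add.
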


Corollary~\ref{C:Al1hom} also strengthens Plo\v{s}\v{c}ica \cite[Theorem~3.2]{Plo21}, which states that every completely normal distributive $0$-lattice of cardinality at most~$\aleph_1$ is Cevian; that is, it carries a binary operation $(x,y)\mapsto x\sd y$ such that $x\leq y\vee(x\sd y)$,\linebreak $(x\sd y)\wedge(y\sd x)=0$, and $x\sd z\leq(x\sd y)\vee(y\sd z)$ for all $x$, $y$, $z$.
Indeed, $\Idc{G}$ is Cevian for any Abelian \lgrp~$G$, and any homomorphic image of a Cevian lattice is Cevian (cf. Wehrung \cite[\S~5]{Ceva}).

\begin{problem}
Let~$D$ be a completely normal distributive $0$-lattice such that for all $a,b\in D$ there exists a sequence $\vecm{c_n}{n<\go}$ from~$D$ such that for all $x\in D$, $a\leq b\vee x$ if{f} there exists $n<\go$ such that $c_n\leq x$ (in~\cite{MV1} we say that~$D$ has \emph{countably based differences}).
If $\card{D}=\aleph_1$, does $D\cong\Idc{G}$ for some Abelian \lgrp~$G$?
\end{problem}

The cases where $\card{D}\leq\aleph_0$ and $\card{D}\geq\aleph_2$ are settled in Wehrung \cite{MV1,Ceva}, in the positive and the negative, respectively (the counterexample constructed in~\cite{Ceva} is not even Cevian, thus it is not a homomorphic image of any~$\Idc{G}$). A Cevian counterexample (of size continuum plus) is constructed in Plo\v{s}\v{c}ica~\cite{Plo21}.


\begin{thebibliography}{10}

\bibitem{Baker1968}
Kirby~A. Baker, \emph{Free vector lattices}, Canad. J. Math. \textbf{20}
  (1968), 58--66. \MR{0224524}

\bibitem{Bern1969}
Simon~J. Bernau, \emph{Free abelian lattice groups}, Math. Ann. \textbf{180}
  (1969), 48--59. \MR{0241340}

\bibitem{BKW}
Alain Bigard, Klaus Keimel, and Samuel Wolfenstein, \emph{{Groupes et {A}nneaux
  {R}\'eticul\'es}}, Lecture Notes in Mathematics, Vol. 608, Springer-Verlag,
  Berlin-New York, 1977. \MR{0552653 (58 \#27688)}

\bibitem{DelMad1994}
Charles~N. Delzell and James~J. Madden, \emph{A completely normal spectral
  space that is not a real spectrum}, J. Algebra \textbf{169} (1994), no.~1,
  71--77. \MR{1296582}

\bibitem{Larder}
Pierre Gillibert and Friedrich Wehrung, \emph{From {O}bjects to {D}iagrams for
  {R}anges of {F}unctors}, Lecture Notes in Mathematics, vol. 2029, Springer,
  Heidelberg, 2011. \MR{2828735 (2012i:18001)}

\bibitem{LTF}
George Gr{\"a}tzer, \emph{{Lattice Theory: Foundation}}, Birkh\"auser/Springer
  Basel AG, Basel, 2011. \MR{2768581 (2012f:06001)}

\bibitem{HeiSha1994}
Lutz Heindorf and Leonid~B. Shapiro, \emph{Nearly {P}rojective {B}oolean
  {A}lgebras}, Lecture Notes in Mathematics, vol. 1596, Springer-Verlag,
  Berlin, 1994, with an appendix by Saka\'{e} Fuchino. \MR{1329090}

\bibitem{Johnst1982}
Peter~T. Johnstone, \emph{Stone {S}paces}, Cambridge Studies in Advanced
  Mathematics, vol.~3, Cambridge University Press, Cambridge, 1982. \MR{698074}

\bibitem{MaddTh}
James~J. Madden, \emph{Two methods in the study of $k$-vector lattices}, Ph.D.
  thesis, Wesleyan University, 1984. \MR{2633496}

\bibitem{Mund2011}
Daniele Mundici, \emph{Advanced {{\L}}ukasiewicz {C}alculus and
  {MV}-{A}lgebras}, Trends in Logic---Studia Logica Library, vol.~35, Springer,
  Dordrecht, 2011. \MR{2815182}

\bibitem{Plo21}
Miroslav Plo\v{s}\v{c}ica, \emph{Cevian properties in ideal lattices of
  {A}belian {$\ell$}-groups}, Forum Math. \textbf{33} (2021), no.~6,
  1651--1658. \MR{4333993}

\bibitem{RumYan2009}
Wolfgang Rump and Yi~Chuan Yang, \emph{The essential cover and the absolute
  cover of a schematic space}, Colloq. Math. \textbf{114} (2009), no.~1,
  53--75. \MR{2457279}

\bibitem{vdD1998}
Lou van~den Dries, \emph{Tame {T}opology and o-{M}inimal {S}tructures}, London
  Mathematical Society Lecture Note Series, vol. 248, Cambridge University
  Press, Cambridge, 1998. \MR{1633348}

\bibitem{MV1}
Friedrich Wehrung, \emph{Spectral spaces of countable {A}belian lattice-ordered
  groups}, Trans. Amer. Math. Soc. \textbf{371} (2019), no.~3, 2133--2158.
  \MR{3894048}

\bibitem{Ceva}
\bysame, \emph{Cevian operations on distributive lattices}, J. Pure Appl.
  Algebra \textbf{224} (2020), no.~4, 106202. \MR{4021916}

\bibitem{NonElt}
\bysame, \emph{From noncommutative diagrams to anti-elementary classes}, J.
  Math. Log. \textbf{21} (2021), no.~2, Paper No. 2150011, 56~p. \MR{4290500}

\bibitem{RAlg}
\bysame, \emph{Real spectra and {$\ell$}-spectra of algebras and vector
  lattices over countable fields}, J. Pure Appl. Algebra \textbf{226} (2022),
  no.~4, Paper No. 106861, 25~p. \MR{4301012}

\bibitem{MVRS}
\bysame, \emph{Real spectrum versus $\ell$-spectrum via {B}rumfiel spectrum},
  hal-01550450, Algebr. Represent. Theory, to appear, 22~p., 2022\noopsort{a}.

\bibitem{AccProj}
\bysame, \emph{{Projective classes as images of accessible functors}},
  hal-03580184, J. Logic Comput., to appear, 45~p., 2022\noopsort{b}.

\end{thebibliography}

\providecommand{\noopsort}[1]{}\def\cprime{$'$}
  \def\polhk#1{\setbox0=\hbox{#1}{\ooalign{\hidewidth
  \lower1.5ex\hbox{`}\hidewidth\crcr\unhbox0}}} 
\providecommand{\bysame}{\leavevmode\hbox to3em{\hrulefill}\thinspace}
\providecommand{\MR}{\relax\ifhmode\unskip\space\fi MR }
\providecommand{\MRhref}[2]{%
  \href{http://www.ams.org/mathscinet-getitem?mr=#1}{#2}
}
\providecommand{\href}[2]{#2}

\end{document}